\newcommand{\bbold}{\mathbb}
\def\R { {\bbold R} }
\def\Q { {\bbold Q} }
\def\Z { {\bbold Z} }
\def\N { {\bbold N} }
\def \cM{\mathbf{M}}
\def \rk{\operatorname{rk}}
\def \trdeg{\operatorname{trdeg}}
\renewcommand\epsilon{\varepsilon}
\def \<{\langle}
\def \>{\rangle}
\def \ee{\preceq}
\def \((  {(\!(}
\def \)) {)\!)}
\def \k {{{\boldsymbol{k}}}}
\DeclareMathSymbol{\precequ}{\mathrel}{symbols}{"16}
\DeclareMathSymbol{\succequ}{\mathrel}{symbols}{"17}
\def \FL{\operatorname{FL}}
\def \DF{\operatorname{DF}}
\def \ACF{\operatorname{ACF}}
\def \ACF2{\operatorname{2ACF}}
\def \RCF{\operatorname{RCF}}
\def \RCF2{\operatorname{2RCF}}
\def \DCF{\operatorname{DCF}}
\newtheorem{theorem}{Theorem}[section]
\newtheorem{lemma}[theorem]{Lemma}
\newtheorem{prop}[theorem]{Proposition}
\newtheorem{cor}[theorem]{Corollary}
\theoremstyle{definition}
\newtheorem{definition}[theorem]{Definition}
\theoremstyle{remark}
\newtheorem*{remark}{Remark}
\let\oldi\i
\let\oldj\j
\renewcommand\i{\relax\ifmmode{\boldsymbol{i}}\else\oldi\fi}
\renewcommand\j{\relax\ifmmode{\boldsymbol{j}}\else\oldj\fi}
\renewcommand\leq{\leqslant}
\renewcommand\geq{\geqslant}
\renewcommand\preceq{\preccurlyeq}
\renewcommand\le{\leq}
\renewcommand\ge{\geq}
\DeclareMathAlphabet{\mathbf}{OML}{cmm}{b}{it}
\DeclareFontFamily{U}{fsy}{}
\DeclareFontShape{U}{fsy}{m}{n}{<->s*[.9]psyr}{}
\DeclareSymbolFont{der@m}{U}{fsy}{m}{n}
\DeclareMathSymbol{\der}{\mathord}{der@m}{182}
\DeclareSymbolFont{der@m}{U}{fsy}{m}{n}
\DeclareMathSymbol{\derdelta}{\mathord}{der@m}{100}
\DeclareSymbolFont{imag@m}{OT1}{cmr}{m}{ui}
\DeclareMathSymbol{\imag}{\mathord}{imag@m}{105}
\DeclareFontFamily{OMS}{smallo}{}
\DeclareFontShape{OMS}{smallo}{m}{n}{<->s*[.65]cmsy10}{}
\DeclareSymbolFont{smallo@m}{OMS}{smallo}{m}{n}
\DeclareMathSymbol{\smallo}{\mathord}{smallo@m}{79}
\DeclareFontFamily{OMS}{largerdot}{}
\DeclareFontShape{OMS}{largerdot}{m}{n}{<->s*[.8]cmsy10}{}
\DeclareSymbolFont{largerdot@m}{OMS}{largerdot}{m}{n}
\DeclareMathSymbol{\largerdot}{\mathord}{largerdot@m}{15}
\newcommand\MR{\operatorname{MR}}
\DeclareMathSymbol{\llambda}{\mathord}{der@m}{108}
\DeclareMathSymbol{\rrho}{\mathord}{der@m}{114}
\newcommand{\equationqed}[1]{\[\pushQED{\qed}#1 \qedhere\popQED\]\let\qed\relax}
\newcommand{\alignqed}[1]{\begin{align*}\pushQED{\qed} #1 \qedhere\popQED\end{align*}\let\qed\relax}
\title{\normalsize\textbf{BOUNDED PREGEOMETRIES AND PAIRS OF FIELDS}}
\author{\scriptsize LEONARDO \'ANGEL\footnote{This paper was written during a visit of the first author in the fall of 2016 to the University of Illinois at Urbana-Champaign.}\quad and LOU VAN DEN DRIES }
\date{}
\begin{document}

\maketitle

\qquad \qquad \qquad{\it For Francisco Miraglia, on his 70th birthday}

\begin{abstract} \noindent
A definable set
in a pair $(K,\k)$ of algebraically closed fields is co-analyzable
relative to the subfield $\k$ of the pair
if and only if it is almost internal to $\k$. To prove this and some related 
results for tame pairs of real closed fields we introduce a
certain kind of ``bounded'' pregeometry for such pairs.   

\end{abstract}

\section{Introduction}

\medskip\noindent
The dimension of an algebraic variety equals the transcendence degree of its function field over the field of constants. 
We can use {\em transcendence degree\/} in a similar way to assign
to definable sets in algebraically closed fields a dimension with good properties such as definable dependence on parameters. Section 1 contains a general setting for model-theoretic pregeometries and proves basic facts about the dimension function associated
to such a pregeometry, including definable dependence on parameters if the pregeometry is {\em bounded}.

One motivation for this came from the following issue concerning  pairs $(K,\k)$ where $K$ is an algebraically closed field and $\k$ is a proper algebraically closed subfield; we refer to such a pair $(K,\k)$ as a {\em pair of algebraically closed fields},
and we consider it in the usual way as an $L_U$-structure, where  $L_U$ is the language $L$ of rings augmented by a unary
relation symbol $U$. Let 
$(K,\k)$ be a pair of algebraically closed fields, and let
$S\subseteq K^n$ be definable in $(K,\k)$. Then we have several notions of $S$ being controlled by $\k$, namely:
{\em $S$ is internal to $\k$}, {\em $S$ is almost internal to $\k$}, and
{\em $S$ is co-analyzable relative to $\k$}. The first notion is a-priori
more narrow than the second, which in turn is more narrow than
the third one. One application of our
results is that in the case at hand, {\em almost internal\/} coincides with {\em co-analyzable relative to $\k$}, both
coinciding with having finite Morley rank. However,
{\em internal to $\k$} is strictly stronger here.
(At the end of this introduction we provide definitions of {\em internal\/} and the like.)

\medskip\noindent
More precisely, for $(K,\k)$ as above we introduce a certain pregeometry on any sufficiently saturated elementary extension
and use this to associate to any nonempty set $S\subseteq K^n$ a dimension 
$\dim_2 S\in \N$ such that $\dim_2 K^n=n$ and $\dim_2 \k^n=0$.
We also assign to the empty subset of $K^n$ the dimension
$\dim_2 \emptyset:= -\infty$. For sets $S\subseteq K$ that are definable in $(K,\k)$ we now 
have the following dichotomy, reminiscent of strong minimality:

\begin{prop}\label{p} Either $\dim_2 S \le 0$, or $\dim_2 (K\setminus S) \le 0$.
\end{prop}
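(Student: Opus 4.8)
The plan is to prove the sharper statement that $\dim_2 S\geq 1$ implies $\dim_2(K\setminus S)\leq 0$, from which the dichotomy is immediate (a nonempty $S$ has $\dim_2 S\in\N$, so $\dim_2 S\not\le 0$ forces $\dim_2 S\ge 1$, while $\dim_2\emptyset=-\infty\le 0$). I work in a sufficiently saturated elementary extension $(K^{*},\k^{*})$ of $(K,\k)$ carrying the pregeometry in question, writing $\operatorname{cl}_2$ for its closure operator, so that for $B\subseteq K^{*}$ the set $\operatorname{cl}_2(B)$ is the relative field-theoretic algebraic closure of $\k^{*}\cup B$ in $K^{*}$. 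Choose a finite $B\subseteq K$ over which $S$, and hence $K\setminus S$, is definable, so that $(K\setminus S)(K^{*})=K^{*}\setminus S(K^{*})$. Since $\dim_2 D$ is the largest $\operatorname{cl}_2$-rank over $B$ attained by an element of $D(K^{*})$, for a $B$-definable $D\subseteq K$ one has $\dim_2 D\leq 0$ if and only if $D(K^{*})\subseteq\operatorname{cl}_2(B)$.

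The crux will be the following genericity property of pairs of algebraically closed fields: any two elements $a,a'\in K^{*}$ transcendental over the field generated by $\k^{*}\cup B$ realize the same type over $B$ in $(K^{*},\k^{*})$. To establish it I would begin with the field isomorphism $\k^{*}(B)(a)\to\k^{*}(B)(a')$ fixing $\k^{*}(B)$ pointwise and sending $a$ to $a'$ — well defined because both are rational function fields in one variable over $\k^{*}(B)$ — extend it to an isomorphism between the relative algebraic closures of these fields in $K^{*}$, and then, using model completeness of the theory of proper pairs of algebraically closed fields together with saturation of $(K^{*},\k^{*})$, extend it to an automorphism of the pair $(K^{*},\k^{*})$ fixing $B$ pointwise and mapping $a$ to $a'$. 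The two types then coincide.

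Granting this, suppose $\dim_2 S\geq 1$. Then there is $a\in S(K^{*})$ with $a\notin\operatorname{cl}_2(B)$, i.e. $a$ transcendental over $\k^{*}(B)$. Any $b\in K^{*}\setminus\operatorname{cl}_2(B)$ is likewise transcendental over $\k^{*}(B)$, so by the genericity property $\operatorname{tp}(b/B)=\operatorname{tp}(a/B)$; as $S$ is $B$-definable and contains $a$, it contains $b$. Hence $K^{*}\setminus S(K^{*})\subseteq\operatorname{cl}_2(B)$, that is $(K\setminus S)(K^{*})\subseteq\operatorname{cl}_2(B)$, whence $\dim_2(K\setminus S)\leq 0$.

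The one step that is not bookkeeping is the genericity property, and within it the passage from a field isomorphism to an automorphism of the pair; this draws on the model theory of pairs of algebraically closed fields rather than on anything about the abstract pregeometry, and is where I expect the real content to lie. An alternative that avoids automorphisms is to use the classical description of one-variable sets definable in the pair: every such set is a Boolean combination of sets $\{x:\exists\bar y\in\k^{m}\ P(x,\bar y,\bar b)=0\}$ with $\bar b\in B$, and each of these is either all of $K$ (when some $\bar y\in\k^{m}$ makes the polynomial vanish identically in $x$) or contained in $\operatorname{cl}_2(B)$ (its roots being algebraic over $\k(B)$). Passing to disjunctive normal form then shows $S$ is itself either contained in $\operatorname{cl}_2(B)$ or contains $K\setminus\operatorname{cl}_2(B)$, which is exactly the asserted dichotomy.
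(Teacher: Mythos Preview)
Your first approach---reducing the dichotomy to uniqueness of the generic $1$-type over any finite $B$---is correct and is a genuinely different route from the paper's. The one point to tighten is the appeal to ``model completeness'': what actually makes the partial map elementary is that your two sub-pairs $\big(\overline{\k^*(B)(a)},\k^*\big)$ and $\big(\overline{\k^*(B)(a')},\k^*\big)$ share the \emph{same} small field $\k^*$, so for any special formula $\exists u\,(U(u)\wedge\phi(u,y))$ the witnesses $u$ already lie in both structures and the field part $\phi$ is absolute between algebraically closed fields; together with the reduction of all $L_U$-formulas to boolean combinations of special formulas (Lemma~\ref{dim9}) this gives $\operatorname{tp}(a/B)=\operatorname{tp}(a'/B)$ directly, without needing a global automorphism.

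The paper argues syntactically instead. It refines the quantifier reduction to \emph{very special} formulas $\exists u\,(U(u)\wedge P_1=\dots=P_r=0\wedge Q\ne 0)$, shows (Lemma~\ref{dim9*}) that each such formula in one free variable defines a set that is either cofinite in $K$ or has $\dim_2\le 0$, and then observes that the definable $S\subseteq K$ with $\dim_2 S\le 0$ or $\dim_2(K\setminus S)\le 0$ form a boolean algebra. This explicit analysis also produces a uniform, parameter-definable description of when $\dim_2\psi(a,K)\le 0$, which is immediately reused to prove boundedness of the pregeometry (Proposition~\ref{pro2}); your type-theoretic argument gives the dichotomy more cleanly but does not by itself deliver that uniformity. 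Your closing ``alternative'' is essentially the paper's route, though the normal form you state---boolean combinations of $\{x:\exists\bar y\in\k^{m}\,P(x,\bar y,\bar b)=0\}$---is a bit too optimistic: since the existential over $\k$ does not commute with conjunction or negation, the building blocks must allow several equations and an inequation under a single quantifier, and the dichotomy for each block is then ``cofinite'' versus ``$\dim_2\le 0$'' rather than ``all of $K$'' versus ``contained in $\operatorname{cl}_2(B)$''.
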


\noindent
For nonempty $S\subseteq K^n$ definable in $(K,\k)$ we have: 

\begin{theorem}\label{mt} The following are equivalent:
\begin{itemize}
\item $\dim_2 S=0$;
\item $S$ is almost internal to $\k$;
\item $S$ is co-analyzable relative to $\k$;
\item $S$ has finite Morley rank.
\end{itemize}
\end{theorem}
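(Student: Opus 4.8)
The plan is to prove the theorem by establishing a cycle of implications, with the pregeometry-theoretic content concentrated in the equivalence $\dim_2 S = 0 \iff S$ almost internal to $\k$, and the remaining links being more standard model-theoretic facts about pairs of algebraically closed fields. I would organize it as: (almost internal $\Rightarrow$ co-analyzable) $\Rightarrow$ (co-analyzable $\Rightarrow$ finite Morley rank) $\Rightarrow$ (finite Morley rank $\Rightarrow$ $\dim_2 S = 0$) $\Rightarrow$ ($\dim_2 S = 0 \Rightarrow$ almost internal). The first of these is immediate from the definitions (internal $\subseteq$ almost internal $\subseteq$ co-analyzable), which the introduction promises to record at the end.

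First I would unwind what $\dim_2$ measures: for a tuple $a$ from a saturated model, $\dim_2$ should be the dimension in the ``$2$-pregeometry'' whose closure operator is $x \mapsto \operatorname{acl}_L(\k\, x)$ intersected appropriately, so that $\dim_2(a) = 0$ exactly when $a$ lies in the model-theoretic algebraic (equivalently, definable, since we are in ACF) closure of $\k$ together with finitely many parameters. Concretely, $\dim_2 S = 0$ means: for every $a \in S$ in a saturated extension, $a \in \operatorname{acl}(\k \cup F)$ for the finite parameter set $F$ over which $S$ is defined. This is precisely the statement that $S$, as a set, is covered by finitely many fibers of $\k$-definable functions, i.e. that $S$ is almost internal to $\k$ — so the equivalence $\dim_2 S = 0 \iff S$ almost internal reduces to a compactness/saturation argument turning the pointwise algebraicity into a uniform finite cover, exactly the kind of ``definable dependence on parameters'' that Section 1 establishes for bounded pregeometries. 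The bulk of the work here is checking that the $2$-pregeometry is bounded, which I would expect the paper to have done in Section 1, and then citing it.

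For finite Morley rank $\Rightarrow \dim_2 S = 0$: if $\dim_2 S \geq 1$, then some $a \in S$ has an element transcendental over $\k\, F$ in the field-theoretic sense relative to the pair, and I would argue this forces $S$ to interpret a copy of $K$ (via Proposition~\ref{p}, the strong-minimality-like dichotomy: a $\dim_2$-one definable subset of $K$ must be co-bounded, hence ``large''), and $K$ in the pair $(K,\k)$ does not have finite Morley rank (the subfield $\k$ is definable and the theory of the pair is not totally transcendental — $\k$ being an infinite, co-infinite definable set with the induced structure of an algebraically closed field defeats $\omega$-stability). Conversely, co-analyzable relative to $\k$ $\Rightarrow$ finite Morley rank is the standard fact that $\k$ itself, being a pure algebraically closed field with no induced structure (by quantifier elimination / stable embeddedness of $\k$ in the pair), is strongly minimal, hence of finite Morley rank, and that being analyzable in (a finite union of fibers over) a finite-Morley-rank set is inherited — Morley rank is finite for sets built by finitely many steps of fibering over a strongly minimal set. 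The main obstacle I anticipate is the finite-Morley-rank $\Rightarrow$ $\dim_2 = 0$ direction: it requires knowing that $\dim_2 S \geq 1$ genuinely witnesses a ``field-like'' quotient inside $S$ of infinite Morley rank, which is where I would lean hardest on Proposition~\ref{p} and on the structure theory of pairs (stable embeddedness of $\k$, the description of definable sets in $(K,\k)$ via bounded quantification), and I would want to be careful that the dichotomy in Proposition~\ref{p} is applied to the right definable set — namely a coordinate projection or fiber of $S$ realizing the positive $\dim_2$.
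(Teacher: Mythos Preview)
Your cycle of implications is close to the paper's, and the key steps are right: $\dim_2 S = 0 \Rightarrow$ almost internal via the description of zero-dimensional sets as finite unions of $\Phi_2$-sets (Lemma~\ref{dim3}), and finite Morley rank $\Rightarrow \dim_2 S = 0$ via the dichotomy of Proposition~\ref{p} together with $\MR(K)$ being infinite. One factual error, though: you write that the theory of the pair is not totally transcendental, but in fact $\ACF2$ \emph{is} $\omega$-stable, with $\MR(\k) = 1$ and $\MR(K) = \omega$ (the paper cites \cite{Bue} and \cite{D2} for this). Your argument only needs $\MR(K) = \omega$, so the step is easily repaired, but the justification you give (``$\k$ being an infinite, co-infinite definable set \dots\ defeats $\omega$-stability'') is simply wrong.

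The paper organizes the cycle slightly differently: it proves co-analyzable $\Rightarrow \dim_2 S = 0$ directly from boundedness of the pregeometry (Corollary~\ref{corpro1}, which holds for any bounded pregeometry and any $0$-definable $D$ with $\dim D = 0$), rather than routing through Morley rank. This makes the triangle $\dim_2 = 0 \Leftrightarrow$ almost internal $\Leftrightarrow$ co-analyzable independent of stability-theoretic input; the Morley-rank equivalence is then attached separately using $\MR(\k)=1$ and $\MR(K)=\omega$. Your route (co-analyzable $\Rightarrow$ finite MR $\Rightarrow \dim_2 = 0$) also works but imports the $\omega$-stability of the pair into the core cycle. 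One further imprecision in your unwinding of $\dim_2 S = 0$: in the saturated extension the condition is that each $s \in S^*$ is field-algebraic over $K\k^*$, so the parameters range over all of $K$ (and over $\k^*$, not $\k$), not just the defining parameters of $S$; the uniformity you want comes from saturation exactly as in Lemma~\ref{dim3}, not from boundedness --- boundedness is what the paper uses for the \emph{other} direction (co-analyzable $\Rightarrow \dim_2 = 0$) via Corollary~\ref{corpro1}.
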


\noindent
We also consider pairs $(K,\R)$ where $K$ is a proper real closed
field extension of the real field $\R$. These pairs are construed
in the usual way as $L_{U}$-structures, and for such a pair $(K,\R)$ and $S\subseteq K$ definable in $(K,\k)$ we have by Dolich, Miller and Steinhorn~\cite[7.4]{Dol} a dichotomy similar to that of Proposition~\ref{p}:

\begin{prop}\label{pr} Either $S$ is discrete, or $S$ has nonempty interior. 
\end{prop}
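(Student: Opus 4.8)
Since Proposition~\ref{pr} is due to Dolich, Miller and Steinhorn, in the paper I would simply cite \cite[7.4]{Dol}; but here is how I would reprove it. The plan is to show that the tame pair $(K,\R)$ is \emph{locally o-minimal} --- for every definable $X\subseteq K$ and every $a\in K$ there is an open interval $I\ni a$ on which $X$ restricts to a finite union of points and open subintervals of $I$ --- since this yields the dichotomy at once: if for every $a$ the local piece $X\cap I$ is a finite set of points then $X$ is discrete, and otherwise some such local piece contains an interval and $X$ has nonempty interior. Two features of the pair drive the argument. Writing $\mathcal O\subseteq K$ for the convex hull of $\R$, a convex valuation ring with maximal ideal $\mathfrak m$ (the elements infinitesimal over $\R$) and standard part map $\operatorname{st}\colon\mathcal O\to\R$ --- all definable in $(K,\R)$ --- the set $\operatorname{st}^{-1}(Z)$ is open in $K$ for every $Z\subseteq\R$ (indeed $\operatorname{st}^{-1}(\{r\})=r+\mathfrak m$ is open, being the union of the intervals $(r-\delta,r+\delta)$ with $\delta\in\mathfrak m$, $\delta>0$), whereas $\R$ itself is discrete in $K$ (for $r\in\R$ and $\delta>0$ infinitesimal, $(r-\delta,r+\delta)\cap\R=\{r\}$). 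These are the prototypes of, respectively, the open and the discrete behaviour that the dichotomy forbids from mixing.

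The content is therefore establishing local o-minimality, and for that I would invoke the structure theory of tame pairs of real closed fields of van den Dries and Lewenberg: every $X\subseteq K^n$ definable in $(K,\R)$ is a finite Boolean combination of semialgebraic sets and sets of the form $\{\bar x:\varphi(\bar x,\operatorname{st} f_1(\bar x),\dots,\operatorname{st} f_k(\bar x))\}$ with $\varphi$ and the $f_i$ semialgebraic. Local o-minimality is preserved under finite Boolean combinations (shrink to a common witnessing interval), so it suffices to check it for each building block. For semialgebraic sets it is just o-minimality of the real closed field $K$. For a block $\{x:\varphi(x,\operatorname{st} f_1(x),\dots,\operatorname{st} f_k(x))\}$ and a point $a$ one uses o-minimality of $K$ to choose $I\ni a$ so small --- possibly of infinitesimal radius --- that on $I$ each $f_i$ is continuous and monotone and, after a cell decomposition on the $\operatorname{st}$-side, each condition $\operatorname{st} f_i(x)\in(\text{cell})$ becomes equivalent on $I$ to a semialgebraic condition on $x$: when $f_i$ is bounded and slowly varying on $I$ this is because $\operatorname{st} f_i$ is constant on $I$, and the remaining cases are absorbed by passing to a still finer scale. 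Then $X\cap I$ is semialgebraic in $I$, hence a finite union of points and intervals.

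The main obstacle is exactly this structure/normal-form step --- equivalently, proving local o-minimality of $(K,\R)$ directly --- and the bookkeeping with scales it requires; everything after it is routine. An alternative packaging is to verify that $(K,\R)$ has o-minimal open core (its definable open sets are precisely the open semialgebraic sets, and its definable discrete sets are closed), and then quote the general theorem of \cite{Dol} that o-minimal open core forces each definable subset of the line to have interior or be nowhere dense, upgrading ``nowhere dense'' to ``discrete'' by tameness. Finally, I note the parallel with Proposition~\ref{p}: ``discrete'' is the topological shadow of ``$\dim_2 S\le 0$'' and ``nonempty interior'' of ``$\dim_2 S=1$'', and once the bounded pregeometry of this paper is in place for tame pairs one expects, for definable $S\subseteq K$, that $S$ is discrete if and only if $\dim_2 S\le 0$ and $S$ has nonempty interior if and only if $\dim_2 S=1$, which would give Proposition~\ref{pr} as a corollary in the same way Proposition~\ref{p} follows there.
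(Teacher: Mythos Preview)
Your approach is exactly the paper's: both simply cite \cite[7.4]{Dol} for the local o-minimality of $(K,\k)$ and read off the dichotomy from that, just as you describe in your first paragraph. The sketch you add toward reproving local o-minimality (via the van den Dries--Lewenberg normal form for definable sets in tame pairs) is plausible extra content that the paper does not attempt. One caveat on your closing remark: in the paper's logical order, boundedness of $(\Phi_2,\RCF2)$ is \emph{deduced from} the discrete/interior dichotomy (the equivalence ``$\dim_2 S\le 0\Leftrightarrow S$ discrete'' for definable $S\subseteq K$ is what shows the set $\{a:\dim_2 S(a)=0\}$ is definable), so obtaining Proposition~\ref{pr} as a corollary of the bounded pregeometry, as you suggest in parallel with Proposition~\ref{p}, would be circular here.
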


\noindent
Here $K$ is equipped with its order topology. For a pair $(K,\R)$ as above we also have a similar dimension $\dim_2 S\in \N\cup\{-\infty\}$ for $S\subseteq K^n$, and for nonempty $S\subseteq K^n$ definable in $(K,\R)$ we have an analogue of Theorem~\ref{mt}:

\begin{theorem}\label{mtr} The following are equivalent:
\begin{itemize}
\item $\dim_2 S=0$;
\item $S$ is internal to $\R$;
\item $S$ is co-analyzable relative to $\R$;
\item $S$ is discrete.
\end{itemize}
\end{theorem}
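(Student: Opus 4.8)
I need to establish the equivalence of four conditions for a nonempty definable $S\subseteq K^n$ in a tame pair $(K,\R)$: (i) $\dim_2 S = 0$; (ii) $S$ is internal to $\R$; (iii) $S$ is co-analyzable relative to $\R$; (iv) $S$ is discrete. The strategy mirrors the proof of Theorem~\ref{mt}, but two features distinguish the real case. First, the top of the cycle must produce \emph{internal} rather than merely \emph{almost internal} — this is possible because over an o-minimal base there are no nontrivial finite covers obstructing the elimination of the "almost", so the weaker notion will collapse into the stronger one. Second, the combinatorial/topological content is carried by Proposition~\ref{pr} (the Dolich–Miller–Steinhorn dichotomy: discrete or nonempty interior) in place of Proposition~\ref{p}.

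**Step 1: (i) $\Leftrightarrow$ (iv), the dimension-zero case.** I would first handle $n=1$. If $\dim_2 S = 0$ then, by the analogue of Proposition~\ref{pr} applied cell by cell (a definable $S\subseteq K$ that is not discrete has nonempty interior, hence contains an $\R$-generic element of $K$ and so has $\dim_2 \geq 1$), $S$ must be discrete. Conversely a discrete definable set is "small" for the bounded pregeometry: every point of it is $\R$-algebraic in the pregeometry sense, giving $\dim_2 S = 0$. For general $n$ I would argue by induction on $n$ using the fiber/additivity properties of $\dim_2$ established in the general pregeometry section (Section 1): $\dim_2 S = 0$ iff all coordinate projections and fibers have $\dim_2 = 0$, and discreteness in $K^n$ decomposes along coordinates in the same way via o-minimal cell decomposition in the pair. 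This is where I lean hardest on the "bounded pregeometry" machinery promised in the introduction.

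**Step 2: (ii) $\Rightarrow$ (iii) $\Rightarrow$ (iv) (or $\Rightarrow$ (i)).** Internal $\Rightarrow$ co-analyzable is immediate from the definitions (internality is a one-step analysis). For co-analyzable $\Rightarrow$ discrete, I would show that co-analyzability relative to $\R$ forces $\dim_2 = 0$: each stage of an analysis of $S$ in terms of $\R$ adds nothing to the $\dim_2$-count, since fibers of the analyzing maps are internal to $\R$ and $\R$ itself has $\dim_2 = 0$; by the additivity of $\dim_2$ over the finitely many stages, $\dim_2 S = 0$, and then Step 1 gives discreteness.

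**Step 3: (iv)/(i) $\Rightarrow$ (ii), closing the cycle — the main obstacle.** This is the hard part. Given $S$ discrete (equivalently $\dim_2 S = 0$), I must produce an $\R$-definable set $X$ and a definable injection (or at least a finite-to-one surjection, then upgraded) from $S$ into some $X^m \times \R^k$. The dimension-zero condition says each element of $S$ lies in the pregeometry-closure of $\R \cup \{\text{parameters}\}$; unwinding the definition of the tame-pair pregeometry, this means each $s \in S$ is a coordinate of a tuple definable over $\R$ from finitely many parameters, in a uniformly definable way over $S$. Compactness/saturation and the definability of $\dim_2$ in families (the "bounded" hypothesis) let me make the bound on the number of parameters uniform, yielding a definable map $S \to \R^N$ with finite fibers, i.e., $S$ is \emph{almost} internal to $\R$. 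To remove the "almost" I invoke o-minimality of the base: a definable finite-to-one correspondence over an o-minimal structure can be definably uniformized (finite fibers can be linearly ordered and their elements selected), so $S$ embeds definably into $\R^N$, giving genuine internality. Verifying that this last uniformization step goes through \emph{in the pair} $(K,\R)$ — not just in $\R$ alone — and checking that the parameter bound really is uniform are the two places I expect to spend the most effort; they are exactly the points where tameness of the pair and boundedness of the pregeometry are indispensable.
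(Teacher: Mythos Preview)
Your Steps~2 and~3 are essentially the paper's argument: co-analyzable $\Rightarrow \dim_2=0$ is Corollary~\ref{corpro1}, and $\dim_2=0 \Rightarrow$ internal is obtained exactly as you say, by first getting \emph{almost} internal (the paper does this via Lemma~\ref{dim3} and the explicit form of $\Phi_2$-sets, as in Proposition~\ref{ai}) and then using the definable total order on $K$ to uniformize finite fibers (this is just Lemma~\ref{dto}; your worry about whether uniformization works ``in the pair'' is unfounded, since the order on $K$ is definable there).

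The genuine gap is in Step~1, the implication \emph{discrete $\Rightarrow \dim_2 S=0$}. For $n=1$ your sentence ``a discrete definable set is small for the bounded pregeometry: every point of it is $\R$-algebraic'' hides the real content. Boundedness is not yet available---the paper \emph{derives} boundedness from the $n=1$ equivalence---and the dichotomy of Proposition~\ref{pr} by itself says nothing about $\dim_2$ of a discrete set. What is actually needed is the stronger fact, extracted from the \emph{proof} of \cite[7.4]{Dol}, that a discrete definable $S\subseteq K$ lies in $\mathrm{dcl}(\k(a))$ for some finite tuple $a$ from $K$ (dcl in the real closed field sense), which then gives $S$ inside a finite union of $\Phi_2$-sets by compactness.

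For $n>1$ the problem is more serious. You propose to induct, claiming that ``discreteness in $K^n$ decomposes along coordinates via o-minimal cell decomposition in the pair.'' But $(K,\R)$ is not o-minimal, only locally o-minimal, and there is no cell decomposition available. More concretely, a discrete definable set in $K^n$ can have non-discrete coordinate projections, so neither the projections nor an inductive fiber argument gets off the ground. The paper circumvents this by a cardinality trick: pass to a countable elementary substructure, then to its completion $K^{\mathrm c}$, so that $K$ becomes uncountable with a countable basis for its topology; a discrete set is then countable, hence each coordinate projection is countable, hence has empty interior, hence has $\dim_2=0$ by the $n=1$ case, and therefore $\dim_2 S=0$. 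This maneuver (borrowed from \cite{D3}) is the missing idea in your proposal.
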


\noindent
The key equivalence here is ``$\dim_2 S = 0\Leftrightarrow S \text{ is discrete}$'' for $n=1$,
and this comes from the paper \cite{Dol} that we already mentioned. This equivalence is needed to show that the relevant
pregeometry that underlies our dimension function $\dim_2$ is
bounded, which in turn is needed to obtain some of the other 
equivalences and the definable dependence of $\dim_2$ on parameters. Our interest in these equivalences came from \cite{D3}, where certain expansions of such tame pairs were shown to have similar properties, but where {\em co-analyzable\/}
turned out to be more general than {\em internal}. 

As we were finishing this paper, Matthias Aschenbrenner informed us of
Fornasiero's article \cite{F}, which has indeed considerable overlap with
ours.  
The pregeometry we
associate here to pairs of algebraically closed fields and the corresponding
dimension function does occur there, but not the above results on {\em tame\/} pairs of real closed fields. On the other hand, \cite{F} covers
{\em dense\/} pairs of real closed fields.

\subsection*{Notations and Preliminaries} We let $m,n$, sometimes subscripted, range over $\N=\{0,1,2,\dots\}$, and we let $\kappa$ range over infinite cardinals. For a relation $R\subseteq P\times Q$
between sets $P$ and $Q$ and $p\in P$, $E\subseteq P$ we set 
$$ R(p)\ :=\ \{q\in Q:\ (p,q)\in R\},\qquad R(E):=\bigcup_{p\in E}R(p).$$
Throughout $L$ is a one-sorted language, but in
some places we specify $L$ further. To define ``internal'' and the like,
let $\cM=(M;\cdots)$ be an $L$-structure, and let $D\subseteq M$ be $0$-definable with $|D|\ge 2$,
where here and below ``definability'' refers to definability in the 
ambient structure
$\cM$. Let also $S\subseteq M^n$ be definable. 

Then
$S$ is said to be {\em internal to $D$\/} if $S=f(E)$ for some definable
$E\subseteq D^m$ and definable map $f: E \to M^n$. Note that if 
$S$ is internal to $D$, then so is
every definable subset of $S$, and so is $g(S)$ for any definable map $g: S \to M^{r},\ r\in \N$. The union of two definable subsets of $M^n$ that are internal to $D$ is internal
to $D$. If $S$ is internal to $D$ and $S'\subseteq M^{n'}$ is definable and internal to $D$, then $S\times S'\subseteq M^{n+n'}$ is internal to $D$. 

The set $S$ is said to be {\em almost internal to $D$\/} if there is a definable relation
$R\subseteq E\times S$ with definable $E\subseteq D^m$
and a $d\in \N$ such that $R(E)=S$ and $|R(e)|\le d$ for all
$e\in E$. The closure properties of internality listed above also hold for almost internality. Clearly: 

\begin{lemma} \label{dto} If $S$ is almost internal to $D$ and there exists a definable total ordering on $M$, then $S$ is internal to $D$.
\end{lemma}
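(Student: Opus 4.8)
The plan is to turn the bounded-to-one witness $R$ for almost internality into finitely many single-valued definable maps, using the ordering on $M$ to make a definable choice from each fibre of $R$, and then appeal to the closure of internality under finite unions recorded just before the lemma.

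Concretely, fix a definable total ordering $\le$ on $M$ and let $\le^{*}$ be the lexicographic ordering it induces on $M^{n}$; this is a definable total ordering of $M^{n}$. Take $R\subseteq E\times S$ with $E\subseteq D^{m}$ definable and $d\in\N$ as in the definition of almost internality, so $R(E)=S$ and $\abs{R(e)}\le d$ for all $e\in E$. For $1\le i\le d$ set
\[
E_{i}\ :=\ \{e\in E:\ \abs{R(e)}\ge i\},
\]
which is definable --- the condition ``$\abs{R(e)}\ge i$'' says there exist pairwise distinct $s_{1},\dots,s_{i}$ with $(e,s_{j})\in R$ --- and satisfies $E_{i}\subseteq E\subseteq D^{m}$. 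Define $f_{i}\colon E_{i}\to M^{n}$ by letting $f_{i}(e)$ be the $i$-th smallest element of $R(e)$ with respect to $\le^{*}$; its graph is definable, being cut out by ``$s\in R(e)$, and exactly $i-1$ elements of $R(e)$ are $\le^{*}$-below $s$''.

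Since for each $e\in E$ we have $R(e)=\{f_{i}(e):1\le i\le\abs{R(e)}\}$, it follows that $S=R(E)=\bigcup_{i=1}^{d}f_{i}(E_{i})$. Each $f_{i}(E_{i})$ is the image of a definable subset of $D^{m}$ under a definable map, hence internal to $D$, and a finite union of definable sets internal to $D$ is internal to $D$; therefore $S$ is internal to $D$. I do not expect a genuine obstacle here: the lemma is essentially the combination of the finite-union closure property with the observation that a definable total ordering lets one definably select the $i$-th element of a uniformly finite family. The only point needing a word is that the slices $f_{i}$ remain inside $D^{m}$, which is immediate from $E_{i}\subseteq E$; the degenerate cases ($S=\emptyset$, or $n=0$, or $m=0$) are covered by the same construction.
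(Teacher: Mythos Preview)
Your argument is correct. The paper does not actually give a proof of this lemma: it is introduced with ``Clearly:'' and stated without proof, so there is nothing to compare against beyond noting that your construction---lexicographically ordering $M^n$, definably selecting the $i$-th element of each fibre $R(e)$, and writing $S$ as the finite union $\bigcup_{i=1}^{d} f_i(E_i)$---is exactly the standard justification the authors had in mind.
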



\medskip\noindent
To define co-analyzability relative to $D$, we assume for simplicity that $\mathbf M$ is $\omega$-saturated; \cite[Section 6]{D3} shows how to drop this assumption. 
By recursion on $r\in \N$ we 
define {\em $S$ is co-analyzable in
$r$ steps} (tacitly: relative to $\mathbf M$ and $D$):
\begin{enumerate}
\item[(C$_{0}$)] $S$ is co-analyzable in $0$ steps iff $S$ is finite;
\item[(C$_{r+1}$)] $S$ is co-analyzable in $r+1$ steps iff for some definable set 
$R\subseteq D\times M^n$,
\begin{enumerate}
\item the natural projection $D\times M^n \to M^n$ maps $R$ onto
$S$;
\item for each $c\in D$, the section $R(c):=\big\{s\in M^n:(c,s)\in R\big\}$ above~$c$ is
co-analyzable  in $r$ steps. \end{enumerate}
\end{enumerate}
We call $S$ {\bf co-analyzable} if $S$ is co-analyzable  in $r$ steps for some~$r$. The closure properties of internality listed above also hold for co-analyzability. 

It is easy to check for $S$ as above that
$$ \text{ internal to }D\ \Longrightarrow\ \text{almost internal to }D\ \Longrightarrow\ \text{co-analyzable relative to }D.$$
Co-analyzability is a robust property. This is also clear from
the following result, \cite[Proposition 2.4]{HHM}, where we assume $L$ is countable:

\begin{prop}\label{hmm} 
Let $T$ be a complete $L$-theory and $D(x)$ an $L$-formula that defines in each model of $T$ a set with more than one element. 
Then the following conditions on an $L$-formula $\varphi(y)$ with $y=(y_1,\dots,y_n)$ are equivalent:
\begin{enumerate}
\item[\textup{(i)}] for some  $\mathbf M\models T$, $\varphi(M^n)$ is co-analyzable relative to $D(M)$;
\item[\textup{(ii)}] for all $\mathbf M\models T$, $\varphi(M^n)$ is co-analyzable relative to $D(M)$;
\item[\textup{(iii)}] for all  $\mathbf M\models T$, if 
$D(M)$ is countable, then so is $\varphi(M^n)$;
\item[\textup{(iv)}] for all $\mathbf M\preceq\mathbf M^*\models T$, if
$D(M)=D(M^*)$, then $\varphi(M^n)=\varphi((M^*)^n)$.
\end{enumerate}
\end{prop}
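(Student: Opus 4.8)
The plan is to route all four conditions through one ``algebraic'' condition. Fix a sufficiently saturated $\mathbf M^*\models T$ and let $(\ast)$ be the statement that $\varphi((M^*)^n)\subseteq\operatorname{acl}\!\big(\bar b\cup D(M^*)\big)$ for some finite tuple $\bar b$ from $M^*$. I would prove $(\mathrm{ii})\Rightarrow(\mathrm{i})\Rightarrow(\ast)\Rightarrow(\mathrm{ii})$, then $(\ast)\Rightarrow(\mathrm{iv})\Rightarrow(\mathrm{iii})$, and finally close the circle with the hard implication $\neg(\ast)\Rightarrow\neg(\mathrm{iii})$; here $(\mathrm{ii})\Rightarrow(\mathrm{i})$ is trivial.

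For $(\mathrm{i})\Rightarrow(\ast)$ I would first use the robustness of co-analyzability (\cite[Section~6]{D3}) to transport co-analyzability to $\mathbf M^*$, and then induct on the number of steps: if $\varphi((M^*)^n)$ is co-analyzable in $r{+}1$ steps via $R\subseteq D\times(M^*)^n$ defined over a finite tuple $\bar c$, each element of $\varphi((M^*)^n)$ lies in some section $R(e)$ with $e\in D$, and $R(e)$ is co-analyzable in $r$ steps over $\bar c e$, so inductively its elements lie in $\operatorname{acl}$ of $\bar c$ together with finitely many elements of $D(M^*)$; thus $(\ast)$ holds with $\bar b=\bar c$. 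For $(\ast)\Rightarrow(\mathrm{ii})$, the key point is that ``$\varphi(x)$ and $x\notin\operatorname{acl}(\bar b\cup D)$'' is a \emph{countable} partial type over the finite tuple $\bar b$ — namely $\varphi(x)$ together with the negations of the countably many formulas $\chi$ of the form ``$\exists\bar z\,\big(D(\bar z)\wedge\psi(x,\bar b,\bar z)\wedge\psi(\cdot,\bar b,\bar z)\text{ has at most }k\text{ solutions}\big)$'', since $L$ is countable. Condition $(\ast)$ says this type is not realized in the saturated $\mathbf M^*$, hence it is inconsistent, so by compactness $\varphi((M^*)^n)\subseteq\chi_1((M^*)^n,\bar b)\cup\dots\cup\chi_m((M^*)^n,\bar b)$ for finitely many such $\chi_i$. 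Each $\chi_i$ defines a set co-analyzable relative to $D$ (peel the coordinates of $\bar z$ off one at a time, ending at finite fibers), a finite union of co-analyzable sets is co-analyzable, and a definable subset of a co-analyzable set is co-analyzable; since $T$ is complete, the first-order sentence ``$\exists\bar w\,\forall x\,\big(\varphi(x)\to\bigvee_i\chi_i(x,\bar w)\big)$'' belongs to $T$, so holds in every $\mathbf M\models T$ and there witnesses co-analyzability of $\varphi(M^n)$. The same sentence gives $(\ast)\Rightarrow(\mathrm{iv})$: if $\mathbf M\preceq\mathbf M^*$ with $D(M)=D(M^*)$, choose the witness $\bar w$ inside $M$; then every element of $\varphi((M^*)^n)$ has all coordinates in $\operatorname{acl}(\bar w\cup D(M))\subseteq\operatorname{acl}_{\mathbf M^*}(M)=M$ (the last equality since $\mathbf M\preceq\mathbf M^*$), so $\varphi((M^*)^n)=\varphi(M^n)$.

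$(\mathrm{iv})\Rightarrow(\mathrm{iii})$ is immediate: given $\mathbf M\models T$ with $D(M)$ countable, take a countable $\mathbf M_0\preceq\mathbf M$ with $D(M)\subseteq M_0$, so $D(M_0)=D(M)$; then $(\mathrm{iv})$ gives $\varphi(M^n)=\varphi(M_0^n)$, which is countable. The hard implication is $\neg(\ast)\Rightarrow\neg(\mathrm{iii})$ — from the failure of $(\ast)$ produce a model of $T$ with $D$ countable but $\varphi$ uncountable. If $\neg(\ast)$ then in particular $\varphi((M^*)^n)\not\subseteq\operatorname{acl}(D(M^*))$; in a Skolemization of $T$ let $\mathbf N$ be the Skolem hull of $D(M^*)$ (its $L$-reduct is an elementary submodel of $\mathbf M^*$). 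Then $D(N)=D(M^*)$ while $N\subseteq\operatorname{acl}(D(M^*))$, so $\varphi((M^*)^n)\not\subseteq N^n$; that is, $\mathbf N\prec\mathbf M^*$ is a proper elementary pair with unchanged $D$-part and strictly larger $\varphi$. Regarded as an $L_U$-structure, $(\mathbf M^*,N)$ satisfies ``$\forall x\,(D(x)\to U(x))$'', the schema ``$U\models T$'', the Tarski--Vaught schema expressing that $U$ is an elementary substructure, and ``$\exists\bar x\,\big(\varphi(\bar x)\wedge\bigvee_j\neg U(x_j)\big)$''. Pass to a countable $L_U$-elementary substructure and run the usual Vaughtian-pair iteration of length $\omega_1$: at each successor stage replace $(\mathbf M_\alpha,\mathbf N_\alpha)$ by a proper $L_U$-elementary extension with the same $L_U$-theory, taking the new value of $U$ to be the old $\mathbf M_\alpha$. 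The union $\mathbf M^\dagger$ has cardinality $\aleph_1$; the clause ``$\forall x\,(D(x)\to U(x))$'' keeps $D(\mathbf M^\dagger)$ equal to the countable $D$-part of the starting structure, while ``$\exists\bar x\,(\varphi(\bar x)\wedge\bigvee_j\neg U(x_j))$'' contributes a new $\varphi$-point at each of the $\omega_1$ successor stages, so $\varphi((M^\dagger)^n)$ is uncountable — contradicting $(\mathrm{iii})$.

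The main obstacle is exactly this last construction. The obvious attempt — repeatedly adjoining a fresh $\varphi$-point while freezing $D$ — can get genuinely stuck: a general theory $T$ has no exchange property, so one cannot control the situation by dimension counting, and the failure to extend need not contradict $\neg(\ast)$. The Vaughtian-pair iteration circumvents this by never choosing new points at all, manipulating only the $L_U$-theory of the pair; this is in effect the proof of Vaught's two-cardinal theorem, and is where countability of $L$ is used a second time. The remaining technical point worth flagging is the transport of co-analyzability between models of differing saturation in $(\mathrm{i})\Rightarrow(\ast)$, which is handled by the robustness result of \cite{D3}; everything else — $(\ast)\Rightarrow(\mathrm{ii})$, $(\ast)\Rightarrow(\mathrm{iv})$, $(\mathrm{iv})\Rightarrow(\mathrm{iii})$ — is compactness and L\"owenheim--Skolem, the one genuine idea there being that the omitted type in $(\ast)\Rightarrow(\mathrm{ii})$ is countable because $L$ is.
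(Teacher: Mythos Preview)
The paper does not prove this proposition at all: it simply quotes it as \cite[Proposition~2.4]{HHM} and moves on, so there is no ``paper's own proof'' to compare against. Your outline is in the right spirit and most of the implications are handled correctly; in particular, routing everything through the condition $(\ast)$ is a natural strategy, and your arguments for $(\mathrm i)\Rightarrow(\ast)$, $(\ast)\Rightarrow(\mathrm{ii})$, $(\ast)\Rightarrow(\mathrm{iv})$, $(\mathrm{iv})\Rightarrow(\mathrm{iii})$, and the Vaught iteration in the second half of $\neg(\ast)\Rightarrow\neg(\mathrm{iii})$ are all fine.

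There is, however, a genuine gap in how you produce the initial Vaughtian pair. You take $N$ to be the Skolem hull of $D(M^*)$ and assert $N\subseteq\operatorname{acl}(D(M^*))$; but Skolem functions witness \emph{existential} formulas, not algebraic ones, so an element of the Skolem hull need not lie in $\operatorname{acl}_L(D(M^*))$. Thus $\neg(\ast)$, which is a statement about $L$-algebraic closure, does not by itself force $\varphi((M^*)^n)\not\subseteq N^n$. This is exactly the place where, as you yourself warn, ``a general theory $T$ has no exchange property'': you cannot get an elementary substructure inside $\operatorname{acl}_L(D)$ for free. The repair is to build the pair more carefully --- for instance, show that $\neg(\ast)$ makes the type ``$\varphi(x)$ and $x$ is not $D$-algebraic over any finite tuple from the small model'' consistent over any countable $M_0$, and then run an omitting-types construction that realizes this type while omitting ``$D(y)\wedge y\notin D(M_0)$''. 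That yields $M_0\preceq M_1$ with $D(M_0)=D(M_1)$ and a new $\varphi$-point, after which your $\omega_1$-iteration goes through unchanged.
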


\section{Pregeometries, Dimension, and Boundedness}\label{pregdim}

\noindent
Let $x_1,x_2, x_3,\dots, y, y_1, y_2, y_2,\dots$ be distinct
variables and let
$$\Phi\ :=\ \big(\phi_i(x_1,\dots, x_{m_i},y)\big)_{i\in I}$$ be 
a family of $L$-formulas.
Let $\cM=(M;\cdots)$ be an $L$-structure. We associate to each parameter set 
$A\subseteq M$ its $\Phi$-image 
$$\Phi(A)\ :=\ \{b\in M:\ \cM\models \phi_i(a,b) \text{ for some $i\in I$ 
and } a\in A^{m_i}\}.$$
By a $\Phi$-set in $\cM$ we mean a subset of $M$ which for some $i\in I$
has the form 
$$\phi_i(a_1,\dots, a_{m_i},M):= \{b\in M:\ \cM\models \phi_i(a_1,\dots, a_{m_i},b)\} \qquad( a_1,\dots, a_{m_i}\in M).$$
Next, let $\Sigma$ be a set of $L$-sentences. We say that 
$(\Phi,\Sigma)$ {\em defines a pregeometry\/} if for every model $\cM=(M;\cdots)$ of
$\Sigma$, the operation $A\mapsto \Phi(A)$ on the power set of $M$
is a pregeometry on $M$, to be called the $(\Phi,\Sigma)$-pregeometry of $\cM$.

\medskip\noindent
{\bf Examples.}

\medskip\noindent
1. Let $F$ be a field and let $L:= L_F$ be the usual language of vector 
spaces over $F$ (with a unary function symbol for each scalar). For 
$c=(c_1,\dots, c_m)\in F^m$, let $\phi_c(x_1,\dots, x_m,y)$
be the formula $c_1x_1+ \cdots + c_mx_m=y$, and let $\Phi_F$ be the family
$\big(\phi_c(x_1,\dots, x_m,y)\big)_{m\in \N, c\in F^m}$. Then for each vector space
$V$ over $F$ the $\Phi_F$-sets in $V$ are the one-element subsets of $V$, and
for $A\subseteq V$,
$$\Phi_F(A)\ =\ \text{the $F$-linear span of $A$}.$$ 
Thus $(\Phi_F, \Sigma_F)$ defines a pregeometry, where $\Sigma_F$ is the usual
set of axioms for vector spaces over $F$.

\medskip\noindent
2. Let $L$ be the language $\{0,1,-,+,\cdot\}$ of rings. To a polynomial
$p(X_1,\dots, X_m,Y)$ in $\Z[X_1,\dots, X_m,Y]$ we associate an $L$-formula
$$  p(x_1,\dots, x_m,y)=0\wedge  p(x_1,\dots, x_m,Y)\ne 0$$
to be denoted by $\phi_p(x_1,\dots, x_m,y)$. (Here the conjunct
$p(x_1,\dots, x_m,Y)\ne 0$ stands for an $L$-formula $\psi_p(x_1,\dots, x_m)$
such that for all
fields $K$ and $a\in K^m$, $K\models \psi_p(a)$ iff
$p(a,Y)\ne 0$ in $K[Y]$.) Let $\Phi_{\FL}$ be the family
$\big(\phi_p(x_1,\dots, x_m,y)\big)$ indexed by the above $(m,p)$. 
The $\Phi_{\FL}$-sets in a field $K$ are its finite subsets
(the zerosets of polynomials
$P(Y)\in K[Y]^{\ne}$), and for
$A\subseteq K$,
$$\Phi_{\FL}(A)\ =\ \{b\in K:\ b \text{ is algebraic over the subfield of $K$ generated by $A$}\}.$$
Thus $(\Phi_{\FL}, \text{FL})$ defines a pregeometry.

\bigskip\noindent
Returning to the general setting, assume that $(\Phi, \Sigma)$ defines a 
pregeometry. Let $\cM=(M;\cdots)\models \Sigma$. For $A\subseteq M$ and $n\ge 1$, a routine induction on $n$ yields a set
$I_n=I_n(\vec x,y_1,\dots, y_n)$ of $L$-formulas $\psi(x_1,x_2,\dots; y_1,\dots, y_n)$ with the following property: for all $b_1,\dots, b_n\in M$,
\begin{align*} b_1,\dots, b_n \text{ are  $(\Phi, \Sigma)$-independent over $A$}\ \Longleftrightarrow\  &(b_1,\dots, b_n) \text{ realizes the partial}\\
 & \qquad \text{type } I_n(A,y_1,\dots, y_n),
\end{align*}
where $I_n(A,y_1,\dots, y_n)$ consists of the formulas $\psi(a_1,a_2\dots;y_1,\dots, y_n)$ such that $\psi(x_1, x_2,\dots; y_1,\dots, y_n)\in I_n$ and $a_1, a_2,\dots \in A$. Moreover, $I_n$ can be 
taken to depend only on $(L,\Phi,n)$, not on $(\Sigma, \cM, A)$. 

\medskip\noindent
For $A\subseteq B \subseteq M$ we have the cardinal 
\begin{align*} \text{rk}(B|A)\ :=\ & \text{the size of any basis of 
$\Phi(B)$ over $\Phi(A)$}\\
&\text{with respect to the $(\Phi, \Sigma)$-pregeometry on $\cM$.}
\end{align*}
Let now $\cM=(M;\cdots)\models \Sigma$. We use the above rank to assign to
any set $S\subseteq M^n$ a dimension $\dim S\in \{-\infty,0,1,\dots, n\}$
as follows. First, suppose 
$S\ne \emptyset$. Then we
take a $\kappa>|M|$ and a $\kappa$-saturated elementary 
extension 
$(\cM^*, S^*)=(M^*;\dots, S^*)$ of the $L_n$-structure $(\cM,S)$, where
$L_n$ is $L$ augmented by an $n$-ary relation symbol as a name for $S$. We set
$$\dim S\ :=\ \max\{\text{rk}(Ms|M):\ s\in S^*\}$$
where the rank is with respect to the $(\Phi, \Sigma)$-pregeometry of $\cM^*$.
Also we put $\dim S :=-\infty$ if $S=\emptyset$. 

\begin{remark}
By the previous characterization of 
independence over a parameter set in terms of realizing a partial type,
$\dim S$ does not depend on our choice of the (suitably saturated)
elementary extension $(\cM^*, S^*)$ of $(\cM,S)$.
\end{remark}
\bigskip\noindent
Here are some easy consequences of this definition of {\em dimension}:

\begin{lemma}\label{dim2} Let $S, S_1, S_2\subseteq M^n$. Then: \begin{enumerate}
\item[ \textup{(i)}] if $S$ is finite and nonempty, then $\dim S =0$;
\item[ \textup{(ii)}] $\dim(S_1\cup S_2)=\max(\dim S_1, \dim S_2)$;
\item[ \textup{(iii)}] If $S_1 \subseteq S_2$, then $\dim(S_1)\leq \dim(S_2)$;
\item[ \textup{(iv)}] $\dim S^{\sigma}=\dim S$ for each permutation $\sigma$ of $\{1,\dots,n\}$, where
$$S^\sigma\ :=\ \big\{\big(y_{\sigma(1)},\dots,y_{\sigma(n)}\big):\ (y_1,\dots,y_n)\in S\big\};$$
\item[ \textup{(v)}] if $m\le n$ and $\pi\colon M^n \to M^m$ is given by $\pi(y_1,\dots, y_n)=(y_1,\dots, y_m)$, then $\dim \pi(S) \le \dim S$;
\item[ \textup{(vi)}] if $\dim S=m$, then $\dim \pi(S^{\sigma})=m$ for some
$\sigma$ as in \textup{(iv)} and $\pi$ as in \textup{(v)}. 
\end{enumerate}
\end{lemma}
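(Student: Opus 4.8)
The plan is to derive all six parts from one organizing device. To evaluate the dimensions occurring in a given clause, I would fix a single cardinal $\kappa>|M|$ and a single $\kappa$-saturated elementary extension of $\cM$ together with names for \emph{all} the sets $S,S_1,S_2$ under discussion at once, say $(\cM^*;S^*,S_1^*,S_2^*)$. Each auxiliary structure I shall need --- $(\cM^*;S^*)$, $(\cM^*;S_1^*\cup S_2^*)$, $(\cM^*;(S^*)^{\sigma})$, $(\cM^*;\pi(S^*))$ --- is obtained from this one by a definitional expansion followed by a reduct, hence is again $\kappa$-saturated, and is an elementary extension of $(\cM;S)$, $(\cM;S_1\cup S_2)$, $(\cM;S^{\sigma})$, $(\cM;\pi(S))$ respectively, since elementarity survives definitional expansion and reduct. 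By the Remark above, $\dim$ may therefore be computed inside this one extension; in particular we may take $(S_1\cup S_2)^*=S_1^*\cup S_2^*$, $(S^{\sigma})^*=(S^*)^{\sigma}$, $(\pi(S))^*=\pi(S^*)$, and, if $S_1\subseteq S_2$, we get $S_1^*\subseteq S_2^*$ --- each such identity being an instance of a first-order formula passing to an elementary extension.

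Granting this, the remaining content is light. For (i): finiteness of $S$, together with its exact cardinality, is expressible by a first-order sentence, so $S^*=S\subseteq M$; then $\rk(Ms|M)=0$ for every $s\in S^*$, and since $S\ne\emptyset$ we get $\dim S=0$. For (ii) and (iii): $\{\rk(Ms|M):s\in S_1^*\cup S_2^*\}$ is visibly the union of the two rank-sets coming from $S_1^*$ and $S_2^*$, and $S_1^*\subseteq S_2^*$ gives an inclusion of rank-sets, so the statements follow by taking maxima, the $-\infty$ convention covering the cases where a set is empty. For (iv): if $s=(b_1,\dots,b_n)\in S^*$ then $s^{\sigma}=(b_{\sigma(1)},\dots,b_{\sigma(n)})$ adjoins to $M$ the \emph{same} finite set of points, so $\rk(Ms^{\sigma}|M)=\rk(Ms|M)$ and the maxima over $(S^*)^{\sigma}$ and $S^*$ coincide. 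For (v): $\pi(s)$ adjoins to $M$ a subset of the points that $s$ does, so by monotonicity of the pregeometry closure $\rk(M\pi(s)|M)\le\rk(Ms|M)\le\dim S$, and taking the maximum over $\pi(S^*)$ yields $\dim\pi(S)\le\dim S$.

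Finally, (vi) combines (iv), (v), and the exchange property of a pregeometry. Suppose $\dim S=m$ and pick $s=(b_1,\dots,b_n)\in S^*$ with $\rk(Ms|M)=m$; then a maximal subtuple of $(b_1,\dots,b_n)$ that is $(\Phi,\Sigma)$-independent over $M$ has exactly $m$ entries, say $b_{i_1},\dots,b_{i_m}$, and $\Phi\big(M\cup\{b_1,\dots,b_n\}\big)=\Phi\big(M\cup\{b_{i_1},\dots,b_{i_m}\}\big)$. Choose a permutation $\sigma$ of $\{1,\dots,n\}$ with $\sigma(k)=i_k$ for $k=1,\dots,m$, and let $\pi$ be the projection onto the first $m$ coordinates. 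Then $\pi(s^{\sigma})=(b_{i_1},\dots,b_{i_m})$ lies in $\pi((S^*)^{\sigma})=(\pi(S^{\sigma}))^*$ and has rank $m$ over $M$, so $\dim\pi(S^{\sigma})\ge m$; on the other hand $\dim\pi(S^{\sigma})\le\dim S^{\sigma}=\dim S=m$ by (v) and (iv), whence $\dim\pi(S^{\sigma})=m$. The only step demanding real care is the bookkeeping in the first paragraph --- verifying that a single saturated extension legitimately computes all the dimensions simultaneously and that the relevant set identities are genuine instances of elementarity --- together with the small check in (vi) that $\sigma$ really moves the independent coordinates into the first $m$ slots; everything else is routine manipulation of the closure operator.
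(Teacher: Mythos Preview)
Your proposal is correct. The paper does not prove this lemma at all: it is introduced with the phrase ``Here are some easy consequences of this definition of \emph{dimension}'' and left to the reader; your argument is exactly the natural unpacking of that claim, with the single-saturated-extension bookkeeping (and the appeal to the Remark on independence of the choice of $(\cM^*,S^*)$) made explicit.
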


\bigskip\noindent
Note also that each nonempty $\Phi$-set has dimension zero. In fact:  

\begin{lemma}\label{dim3} For nonempty $S\subseteq M$, 
$$\dim S=0\ \Longleftrightarrow\  S \text{ is contained in a finite union of $\Phi$-sets}.$$
\end{lemma}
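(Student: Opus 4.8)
The plan is to prove both implications of Lemma~\ref{dim3} directly from the definition of $\dim S$ and the characterization of $(\Phi,\Sigma)$-independence in terms of the partial type $I_1$.

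For the direction ($\Leftarrow$), suppose $S \subseteq \phi_{i_1}(a^{(1)},M) \cup \cdots \cup \phi_{i_k}(a^{(k)},M)$ for finitely many $\Phi$-sets, with all parameter tuples drawn from $M$. Since these finitely many $\Phi$-sets are named by finitely many parameters $a \in M$, in the saturated extension $(\cM^*, S^*)$ we still have $S^* \subseteq \Phi_{i_1}(\bar a^{(1)}) \cup \cdots$ over the parameter set $M$ — here I would use elementarity to transfer the containment. Hence every $s \in S^*$ lies in some $\Phi$-set with parameters in $M$, so $s \in \Phi(M)$, which means $s$ is $(\Phi,\Sigma)$-dependent on $M$, i.e. $\mathrm{rk}(Ms|M) = 0$. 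Taking the max over $s \in S^*$ gives $\dim S = 0$ (it is nonempty, so the max is not $-\infty$).

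For the direction ($\Rightarrow$), suppose $\dim S = 0$, so $\mathrm{rk}(Ms|M) = 0$ for every $s \in S^*$, i.e. every $s \in S^*$ lies in $\Phi(M)$, hence in some $\Phi$-set $\phi_i(a,M)$ with $i \in I$ and $a \in M^{m_i}$. The key move is a compactness/saturation argument: the condition ``$s \in S^*$ and $s \notin \phi_i(a,M)$ for all $i \in I$, $a \in M^{m_i}$'' is expressed by a partial $L_1$-type over $M$ in the extended language (with the relation symbol for $S$), and by the previous paragraph it is not realized in the $\kappa$-saturated model $(\cM^*, S^*)$ with $\kappa > |M|$; since that type uses at most $|M|$ parameters, saturation forces a \emph{finite} subset of it to be inconsistent with $S^*(y)$. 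Unwinding, this yields finitely many $\Phi$-sets $\phi_{i_1}(a^{(1)},M), \dots, \phi_{i_k}(a^{(k)},M)$ (parameters in $M$) whose union contains $S^*$, hence contains $S$. One should be slightly careful that a single $\phi_i$ can contribute a whole family of $\Phi$-sets as $a$ ranges over $M^{m_i}$, so the finiteness from saturation is genuinely needed to cut this down; after that, restricting back from $(\cM^*,S^*)$ to $(\cM,S)$ via elementarity gives the containment of $S$ in that finite union of $\Phi$-sets of $\cM$.

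The main obstacle is the compactness step in ($\Rightarrow$): making precise that ``being contained in a finite union of $\Phi$-sets with parameters in $M$'' is exactly the negation of the realizability of a suitable partial type, and invoking saturation with the right cardinality bound. Everything else — the transfer of containments under elementary extension, and the identification of $\mathrm{rk}(Ms|M)=0$ with $s \in \Phi(M)$ — is routine from the setup, in particular from the defining property of $I_1$ and the pregeometry axioms (a single element is independent over $A$ iff it is not in the closure $\Phi(A)$).
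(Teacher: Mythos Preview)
Your proof is correct and follows essentially the same approach as the paper. For $(\Rightarrow)$ both arguments use the $\kappa$-saturation of $(\cM^*,S^*)$ to pass from ``every $s\in S^*$ lies in some $\Phi$-set with parameters in $M$'' to a finite cover; for $(\Leftarrow)$ the paper simply invokes Lemma~\ref{dim2}(ii),(iii) together with the observation (recorded just before the lemma) that nonempty $\Phi$-sets have dimension zero, whereas you unfold this directly via elementarity, but the content is the same.
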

\begin{proof}
Suppose $\dim S=0$. Then $\text{rk}(Ms|M)=0$ for all $s\in S^*$, so for every $s\in S^*$ we have $i(s)\in I$ and $a_s \in M^{m_i(s)}$ such that $\cM^*\models \phi_{i(s)}(a_s,s)$. Hence
$$S^*\ \subseteq\ \bigcup\limits_{i(s)\in I}\phi_{i(s)}(a_s,M^*).$$
Then saturation of $(\cM^*,S^*)$ yields a finite $J\subseteq I$ and elements
$a_i\in M^{m_i}$ for $i\in J$ such that 
$$S\ \subseteq\ \bigcup\limits_{i\in J}\phi_{i}(a_i,M).$$
For $\Leftarrow$, use property \textup{(ii)} of lemma \ref{dim2}.
\end{proof}

\noindent
For $S\subseteq M^n$, $\dim_{\cM} S$ stands for $\dim S$ 
in case we need to indicate $\cM$. 

\begin{lemma}\label{dim5} Suppose that $S\subseteq M^n$ and $(\cM,S)\ee (\cM', S')$ as $L_n$-structures. Then 
$\dim_{\cM} S =  \dim_{\cM'} S'$.
\end{lemma}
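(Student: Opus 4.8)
The plan is to compute both dimensions inside one common, sufficiently saturated elementary extension, so that both ranks get measured in the \emph{same} pregeometry. Write $M'$ for the universe of $\cM'$. I would fix $\kappa>|M'|$ and pass to a $\kappa$-saturated elementary extension $(\cM^*,S^*)$ of $(\cM',S')$; then $(\cM,S)\preceq(\cM',S')\preceq(\cM^*,S^*)$ by transitivity, and since $\kappa>|M'|\ge|M|$ this $(\cM^*,S^*)$ is an admissible choice in the definition of $\dim$ for both $(\cM,S)$ and $(\cM',S')$. Hence by the Remark on independence of $\dim$ from the chosen saturated extension,
$$\dim_{\cM}S\ =\ \max\{\rk(Ms|M):s\in S^*\},\qquad \dim_{\cM'}S'\ =\ \max\{\rk(M's|M'):s\in S^*\},$$
both ranks computed in the $(\Phi,\Sigma)$-pregeometry of $\cM^*$. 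If $S=\emptyset$ then $S'=\emptyset$ and both sides equal $-\infty$, so assume $S\ne\emptyset$; then $S^*\ne\emptyset$ and the maxima run over a nonempty subset of $\{0,\dots,n\}$. Since enlarging the base of a pregeometry cannot increase the rank of a finite tuple, $\rk(Ms|M)\ge\rk(M's|M')$ for all $s\in S^*$, and comparing maxima gives $\dim_{\cM}S\ge\dim_{\cM'}S'$.

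For the reverse inequality $\dim_{\cM}S\le\dim_{\cM'}S'$, I would put $m:=\dim_{\cM}S$, fix $s=(s_1,\dots,s_n)\in S^*$ with $\rk(Ms|M)=m$, and, after relabelling coordinates, assume that $s_1,\dots,s_m$ are $(\Phi,\Sigma)$-independent over $M$ in $\cM^*$; equivalently, $(s_1,\dots,s_m)$ realizes the partial type $I_m(M,y_1,\dots,y_m)$. Let $\sigma(y_1,\dots,y_m)$ be the parameter-free $L_n$-formula $\exists y_{m+1}\cdots y_n\,\big((y_1,\dots,y_n)\in S\big)$ (with $S$ the relation symbol of $L_n$), which defines the projection $\widehat S$ of $S^*$ onto the first $m$ coordinates; note $(s_1,\dots,s_m)\in\widehat S$ and that every point of $\widehat S$ lifts to a point of $S^*$. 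Thus it suffices to find $(t_1,\dots,t_m)\in\widehat S$ that is $(\Phi,\Sigma)$-independent over $M'$: any lift $t\in S^*$ then has $\rk(M't|M')\ge m$, whence $\dim_{\cM'}S'\ge m$. Equivalently, it suffices to realize in $(\cM^*,S^*)$ the partial type over $M'$
$$\Theta(y_1,\dots,y_m)\ :=\ \{\sigma(y_1,\dots,y_m)\}\ \cup\ I_m(M',y_1,\dots,y_m),$$
and by $\kappa$-saturation this reduces to showing that $\Theta$ is finitely satisfiable in $(\cM^*,S^*)$.

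The substantive point is this finite satisfiability. Suppose it fails; then there are $\psi_1,\dots,\psi_r\in I_m$ and tuples $\bar c_1,\dots,\bar c_r$ from $M'$ such that $\{\sigma(\bar y)\}\cup\{\psi_l(\bar c_l,\bar y):l\le r\}$ is unsatisfiable in $(\cM^*,S^*)$. Let $\chi(\bar z_1,\dots,\bar z_r)$ denote the $L_n$-formula $\forall\bar y\,\big(\sigma(\bar y)\to\bigvee_{l\le r}\neg\psi_l(\bar z_l,\bar y)\big)$, so that $(\cM^*,S^*)\models\chi(\bar c_1,\dots,\bar c_r)$. Now push the parameters down: as $(\cM',S')\preceq(\cM^*,S^*)$ and the $\bar c_l$ lie in $M'$, we get $(\cM',S')\models\exists\bar z_1\cdots\bar z_r\,\chi$; as $(\cM,S)\preceq(\cM',S')$, also $(\cM,S)\models\exists\bar z_1\cdots\bar z_r\,\chi$, so $\chi(\bar d_1,\dots,\bar d_r)$ holds for suitable tuples $\bar d_l$ from $M$; and as $(\cM,S)\preceq(\cM^*,S^*)$ we recover $(\cM^*,S^*)\models\chi(\bar d_1,\dots,\bar d_r)$. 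Hence $\{\sigma(\bar y)\}\cup\{\psi_l(\bar d_l,\bar y):l\le r\}$ is unsatisfiable in $(\cM^*,S^*)$; but each $\psi_l(\bar d_l,\bar y)$ belongs to $I_m(M,y_1,\dots,y_m)$, which contradicts that $(s_1,\dots,s_m)$ realizes $\{\sigma(\bar y)\}\cup I_m(M,y_1,\dots,y_m)$ in $(\cM^*,S^*)$. So $\Theta$ is finitely satisfiable, and therefore $\dim_{\cM}S=\dim_{\cM'}S'$.

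I expect the parameter pushdown to be the one step needing care. One cannot simply restrict the finitely many covering $\Phi$-sets (whose parameters lie in $M'$) to $\cM$; the point is rather that ``some finitely many $\Phi$-sets of the prescribed shapes jointly cover $\widehat S$'' is expressible by an $L_n$-sentence, so the elementarity $(\cM,S)\preceq(\cM',S')$ lets one trade the $M'$-parameters for $M$-parameters without losing the covering — and this covering over $M$ is exactly what conflicts with the $M$-genericity of $(s_1,\dots,s_m)$. Everything else is the routine interplay of saturation, the Remark, the rank inequality under base extension, and the characterization of $(\Phi,\Sigma)$-independence by the formula sets $I_m$.
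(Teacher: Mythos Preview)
Your proof is correct and follows essentially the same approach as the paper's: both compute the two dimensions in a common $\kappa$-saturated extension, get $\dim_{\cM}S\ge\dim_{\cM'}S'$ from rank monotonicity, and obtain the reverse inequality by realizing $I_m(M',\bar y)$ together with membership in $S^*$ via saturation and the elementarity $(\cM,S)\preceq(\cM',S')$. The only differences are cosmetic: the paper keeps the full $n$-tuple and the relation symbol $R(y_1,\dots,y_n)$ instead of projecting via your $\sigma$, and it compresses your careful parameter-pushdown argument into the single phrase ``in view of $(\cM,S)\preceq(\cM',S')$''---which is exactly the step you (rightly) flagged as needing care and then carried out in full.
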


\begin{proof}
Let $(\cM^*,S^*)$ be an $\kappa$-saturated elementary extension of $(\cM', S')$ with $\kappa>|M'|$. We can assume that $S\ne \emptyset$. Using $(\cM^*, S^*)$ it follows from the definition of $\dim$ that $\dim_{\cM} S\geq \dim_{\cM'} S'$.  Let $\dim_{\cM}S=d$, and
take $s\in S^*$ with $\rk(Ms|M)=d$, and let $1\le i_1< \dots < i_d\le n$ be such that $s_{i_1},\dots, s_{i_d}$ are $(\Phi, \Sigma)$-independent over $M$. Then $(s_1,\dots, s_n)$ realizes the partial type $I_d(M,y_{i_1},\dots,y_{i_d})\cup\{R(y_1,\dots, y_n)\}$ in
$(\cM^*, S^*)$, where $R$ is the $n$-ary relation symbol for $S$. In view of $(\cM,S)\ee (\cM', S')$ it follows that the partial type $I_d(M',y_{i_1},\dots,y_{i_d})\cup\{R(y_1,\dots, y_n)\}$ can be realized in
$(\cM^*, S^*)$, and thus $\dim_{\cM'}S'\ge d$.  
\end{proof}

\noindent
This leads to the following:

\begin{lemma}\label{dim4} $\dim(S_1\times S_2)=\dim S_1 + \dim S_2$ for $S_1\subseteq M^m$ and $S_2\subseteq M^n$. 
\end{lemma}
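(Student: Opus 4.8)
The plan is to prove the two inequalities $\dim(S_1\times S_2) \le \dim S_1 + \dim S_2$ and $\dim(S_1\times S_2) \ge \dim S_1 + \dim S_2$ separately, using the characterization of dimension via ranks of realizations in a saturated extension together with the additivity of pregeometry rank. We may assume both $S_1$ and $S_2$ are nonempty, since otherwise both sides are $-\infty$. First I would pass to a single sufficiently saturated elementary extension $(\cM^*, S_1^*, S_2^*)$ of the $L_{m,n}$-structure $(\cM, S_1, S_2)$, with $\kappa > |M|$ and $\kappa$-saturated; by Lemma \ref{dim5} (applied with the obvious reducts) the dimensions of $S_1$, $S_2$, and $S_1\times S_2$ can all be computed inside this one structure, and $(S_1\times S_2)^* = S_1^*\times S_2^*$.

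For the inequality $\le$, take $(s,t)\in S_1^*\times S_2^*$ with $\rk\big(M(s,t)\,\big|\,M\big) = \dim(S_1\times S_2)$. By the general submodularity/transitivity of pregeometry rank we have
\[
\rk\big(M(s,t)\,\big|\,M\big)\ \le\ \rk(Ms\,|\,M)\ +\ \rk\big(Mst\,\big|\,Ms\big)\ \le\ \rk(Ms\,|\,M) + \rk(Mt\,|\,M),
\]
where the last step uses that adding more parameters (here enlarging $M$ to $Ms$) does not increase the rank of $t$ over them. Since $s\in S_1^*$ gives $\rk(Ms\,|\,M)\le \dim S_1$ and $t\in S_2^*$ gives $\rk(Mt\,|\,M)\le \dim S_2$, the bound follows.

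For the reverse inequality, choose $s\in S_1^*$ with $\rk(Ms\,|\,M) = \dim S_1 =: d_1$ and $t\in S_2^*$ with $\rk(Mt\,|\,M) = \dim S_2 =: d_2$. The subtlety — and the main obstacle — is that these two witnesses live in the same $\cM^*$ but are a priori ``entangled'': I need a pair $(s',t')\in S_1^*\times S_2^*$ whose combined rank over $M$ is $d_1+d_2$, i.e. I need $t'$ to remain $d_2$-independent over $Ms'$ rather than just over $M$. I would handle this by a realization-of-types argument in the spirit of the proof of Lemma \ref{dim5}: fix $s'\in S_1^*$ witnessing $d_1$, and observe that the partial type over $Ms'$ asserting $R_2(y)$ (the name for $S_2$) together with $I_{d_2}(Ms', y_{j_1},\dots, y_{j_{d_2}})$ — expressing $d_2$-independence of suitable coordinates over $Ms'$ — is finitely satisfiable: any finite fragment mentions only finitely many parameters from $Ms'$, hence finitely many from $M$, and since $\dim S_2 = d_2$ holds as computed over \emph{any} small parameter set (by the characterization via $I_n$ and since $S_2^*$ is $\kappa$-saturated with $\kappa$ large), such a fragment is realized. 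By $\kappa$-saturation of $(\cM^*, S_1^*, S_2^*)$ over the set $M\cup\{s'\}$ (which has size $< \kappa$) the full type is realized by some $t'\in S_2^*$, and then $\rk\big(M(s',t')\,\big|\,M\big) \ge \rk(Ms'\,|\,M) + \rk\big(Ms't'\,\big|\,Ms'\big) \ge d_1 + d_2$. Combining the two inequalities gives the lemma. The one point requiring care is making precise that ``$\dim S_2 = d_2$'' transfers to independence over the larger base $Ms'$; this is exactly what the base-independence of the formula set $I_n$ (noted right after its construction) is for.
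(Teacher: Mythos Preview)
Your strategy matches the paper's: $\le$ by rank additivity, $\ge$ by producing a pair of witnesses whose ranks add. The execution of the $\ge$ direction differs, and the paper's route avoids the one step in yours that is not fully justified.

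Your finite-satisfiability claim is true, but the reason you give is not a proof. A finite fragment of $I_{d_2}(Ms',y)\cup\{R_2(y)\}$ genuinely involves $s'\notin M$; saying it ``mentions finitely many parameters from $M$'' does not reduce it to a fragment of $I_{d_2}(M,y)$, and the remark that the formula-set $I_n$ is independent of the base concerns only its \emph{syntax}, not realizability of the resulting type over $Ms'$. What actually closes the gap is an elementarity step you do not state: writing the fragment as $R_2(y)\wedge\theta(a,s',y)$ with $a$ a tuple from $M$, one checks that $(\cM,S_1,S_2)\models\forall z\,\exists y\big(R_2(y)\wedge\theta(a,z,y)\big)$, since for each $z$ in $M^m$ the formula $\theta(a,z,y)\wedge R_2(y)$ is a finite subtype of the realized type $I_{d_2}(M,y)\cup\{R_2(y)\}$; then transfer to $(\cM^*,S_1^*,S_2^*)$ and specialize $z=s'$.

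The paper sidesteps this by taking two successive extensions rather than one. First pass to $(\cM',S_1',S_2')\succeq(\cM,S_1,S_2)$ containing $s_1\in S_1'$ with $\rk(Ms_1|M)=d_1$; then Lemma~\ref{dim5} gives $\dim_{\cM'}S_2'=d_2$, so by the \emph{definition} of dimension a further extension $(\cM^*,S_1^*,S_2^*)$ contains $s_2\in S_2^*$ with $\rk(M's_2|M')=d_2$. Since $Ms_1\subseteq M'$, additivity yields $\rk(Ms_1s_2|M)\ge d_1+d_2$ at once. This is your idea repackaged: the independence of the second witness over a set containing the first comes for free from Lemma~\ref{dim5}, rather than from a saturation argument inside a single model.
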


\begin{proof} We may assume that $S_1$ and $S_2$ are nonempty.
The definition of $\dim$ yields $\dim S_1\times S_2\leq \dim S_1 + \dim S_2$. Let $\dim S_1=d_1$ and $\dim S_2=d_2$. Take an elementary extension
$(\cM', S_1', S_2')$ of $(\cM, S_1, S_2)$ with a point
$s_1\in S_1'$ such that $\rk(Ms|M)=d_1$. Next, by Lemma~\ref{dim5},
we can take an elementary extension $(\cM^*, S_1^*, S_2^*)$
of $(\cM', S_1', S_2')$ and a point $s_2\in S_2^*$ such that $\rk(M's_2|M')=d_2$. Hence $\rk(Ms_1s_2|M)\ge d_1+d_2$, and
thus $\dim S_1\times S_2\ge d_1+d_2$.   
\end{proof}

\noindent
Thus for nonempty $S\subseteq M^n$ we have: $\dim S = 0$ iff $\dim \pi_j(S)=0$
for $j=1,\dots,n$, where $\pi_j: M^n \to M$ is given by 
$\pi(y_1,\dots, y_n)=y_j$. 

So far we didn't assume that the sets 
$S\subseteq M^n$ are definable. Below we make further 
assumptions on our pregeometry and consider only definable sets.

\subsection*{Pregeometries that are nontrivial and bounded}
We call $(\Phi, \Sigma)$ 
{\it trivial\/} on the model $\cM=(M;\cdots)$ of $\Sigma$
if $M$ is a finite union
of $\Phi$-sets in $\cM$ (in which case $\dim S=0$ for every
nonempty set $S\subseteq M^n$). Note that $(\Phi, \Sigma)$
is trivial on every finite model of $\Sigma$. In order to focus on cases of interest we say that $(\Phi, \Sigma)$ is 
{\it nontrivial\/} if for every $\cM\models \Sigma$ it is not trivial on $\cM $ (so $\Sigma$ has no finite models).

Note that $(\Phi_F,\Sigma_F)$ and 
$(\Sigma_{\FL},\FL)$ in the examples 1 and 2 do not satisfy this nontriviality condition. This is easy to repair: let $\Sigma_F^{\infty}$ be a set of 
$L_F$-axioms whose models are the {\em infinite\/} vector spaces, and let
$\FL^{\infty}$  be a set of 
axioms in the language of rings whose models are the {\em infinite\/} fields.
Then $(\Phi_F, \Sigma_F^{\infty})$ and $(\Phi_{\FL}, \FL^{\infty})$ are nontrivial.

\bigskip\noindent
Note that if $(\Phi, \Sigma)$ is nontrivial, $\cM=(M;\cdots)\models \Sigma$ is 
$\kappa$-saturated, and $A\subseteq M$ is a parameter set
of size $< \kappa$, then $\Phi(A)\ne M$. This in turn yields:

\begin{cor} Suppose $(\Phi, \Sigma)$ is nontrivial.
Then for every $\cM\models\Sigma$ with
$\cM=(M;\dots)$ we have 
$\dim M = 1$, and thus $\dim M^n=n$ by Lemma~\ref{dim4}. 
\end{cor}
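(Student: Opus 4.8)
The plan is to show $\dim M = 1$ first, and then invoke Lemma~\ref{dim4} exactly as the corollary's statement already indicates. Since nontriviality forbids finite models of $\Sigma$, the set $M$ is infinite, hence nonempty, so $\dim M \in \{0, 1\}$ (it cannot exceed $n = 1$). So the whole content is the single claim that $\dim M \neq 0$.

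To prove $\dim M \neq 0$, I would argue by contraposition using Lemma~\ref{dim3}: if $\dim M = 0$, then $M$ — viewed as a subset of $M$ — is contained in a finite union of $\Phi$-sets in $\cM$. Since each $\Phi$-set is itself a subset of $M$, this forces $M$ to be a finite union of $\Phi$-sets, i.e. $(\Phi, \Sigma)$ is trivial on $\cM$. This contradicts the nontriviality hypothesis. Hence $\dim M = 1$.

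Alternatively — and this is the route the excerpt's preceding paragraph is clearly setting up — one can argue semantically: pass to a $\kappa$-saturated elementary extension $\cM^* = (M^*; \dots)$ of $\cM$ with $\kappa > |M|$, as in the definition of $\dim$. Taking $A = M$ as a parameter set of size $< \kappa$, nontriviality gives $\Phi(M) \neq M^*$, so there is some $s \in M^* \setminus \Phi(M)$; such an $s$ is $(\Phi, \Sigma)$-independent over $M$, so $\rk(Ms \mid M) = 1$, whence $\dim M \geq 1$, and combined with $\dim M \leq 1$ we get $\dim M = 1$. (One should note that in the definition of $\dim M$ the relevant elementary extension is of $(\cM, M)$ in the language $L_1$; but the relation symbol naming $M$ is vacuous since the set is all of $M$, so the extension is just an elementary extension of $\cM$, and one may take it $\kappa$-saturated.) Then $\dim M^n = \dim(M \times \cdots \times M) = n \cdot \dim M = n$ by $n{-}1$ applications of Lemma~\ref{dim4}.

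There is essentially no obstacle here: the corollary is an immediate packaging of the observation recorded in the paragraph just above it (``$\Phi(A) \neq M$'' for small parameter sets in a saturated model) together with Lemma~\ref{dim3} and Lemma~\ref{dim4}. The only point requiring a moment's care is the bookkeeping around which structure gets saturated in the definition of $\dim M$ versus the saturated model used to witness $\Phi(M) \neq M^*$ — but since the naming relation for $M \subseteq M$ carries no information, these coincide, and the argument goes through verbatim.
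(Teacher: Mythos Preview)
Your proposal is correct and matches the paper's approach: the paper gives no separate proof, simply writing ``This in turn yields:'' after the observation that $\Phi(A)\neq M$ for small $A$ in a $\kappa$-saturated model, which is exactly your second (semantic) argument. Your first argument via Lemma~\ref{dim3} is an equally valid alternative.
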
   

\noindent
Next we consider definable sets in models of
$\Sigma$. It is clearly desirable 
that the dimension of such a set be invariant under definable bijections
and varies definably in definable families. To obtain this and more
we introduce a stronger condition: We say that $(\Phi, \Sigma)$ is
{\em bounded\/} if it is nontrivial and for every model $\cM=(M;\cdots)$ of $\Sigma$ and definable
$S\subseteq M^{n+1}$ the set $\{a\in M^n:\ \dim S(a)=0\}$ is definable; here $S(a):=\{b\in M: (a,b)\in S\}$. 

\begin{lemma}\label{dim8} Assume $(\Phi, \Sigma)$ is nontrivial.
Then $(\Phi, \Sigma)$ is bounded if and only if for every model 
$\cM=(M;\cdots)$ of $\Sigma$ and definable
$S\subseteq M^{n+1}$ there are $($not necessarily distinct$)$ indices $i(1),\dots, i(r)$ in $I$ with $r\in \N$, 
such that for all $a\in M^n$, if $\dim S(a)=0$, then
$$S(a)\ \subseteq\ \phi_{i(1)}(a_1,M)\cup \cdots \cup  \phi_{i(r)}(a_r,M)$$
for some tuples $a_1\in M^{m_{i(1)}},\dots, a_r\in M^{m_{i(r)}}$. 
\end{lemma}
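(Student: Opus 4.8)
The plan is to prove the equivalence in Lemma~\ref{dim8} by a compactness argument, exploiting the fact established just before the lemma statement (via the remark that $I_n$ depends only on $(L,\Phi,n)$) that $(\Phi,\Sigma)$-independence is type-definable uniformly across all models of $\Sigma$. The direction from the ``$\phi_{i(1)},\dots,\phi_{i(r)}$'' condition to boundedness is the routine one: if every zero-dimensional section $S(a)$ is covered by $r$ many $\Phi$-sets with indices drawn from a fixed finite list $i(1),\dots,i(r)$, then by Lemma~\ref{dim3} (together with Lemma~\ref{dim2}(ii)) the condition ``$\dim S(a)=0$'' is equivalent to the existence of witnessing tuples $a_1,\dots,a_r$, and this existential statement is first-order; conversely one must also note that $\dim S(a)=0$ forces $S(a)$ to be contained in such a union, so the two conditions define the same set $\{a:\dim S(a)=0\}$, which is therefore definable. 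First I would write this out carefully, being attentive to the fact that we need the covering condition to be not just sufficient but to actually hold for \emph{all} zero-dimensional sections, which is exactly what the lemma's second clause asserts.

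The substantive direction is the forward one: assuming $(\Phi,\Sigma)$ is bounded, I want to extract a single finite list of indices that works for \emph{every} model $\cM\models\Sigma$ and the given definable $S\subseteq M^{n+1}$ (here $S$ is given by a fixed $L$-formula $\sigma(x,y;z)$ with parameters, so really the claim is uniform over the parameter tuple too). The key step is to work in a sufficiently saturated $\cM\models\Sigma$ and argue by contradiction: suppose no finite list $i(1),\dots,i(r)$ suffices. By Lemma~\ref{dim3}, for each $a$ with $\dim S(a)=0$ there \emph{is} some finite list of indices and witnessing tuples covering $S(a)$; what can fail is that the list cannot be chosen uniformly in $a$. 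So for every finite $J\subseteq I$ and every $r$ there is a model and a parameter $a$ with $\dim S(a)=0$ but $S(a)$ not covered by any $r$-fold union of $\Phi_j$-sets with $j\in J$. I would then set up a type, in variables naming the parameters of $S$, the tuple $a$, and an auxiliary element $b$, expressing: ``$\dim S(a)=0$'' (using boundedness, this is a first-order condition on $a$ — this is precisely where boundedness, not mere nontriviality, is used) together with, for each finite $J$ and each choice of candidate tuples, ``$b\in S(a)$ but $b$ avoids all the named $\Phi_j$-sets.''

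The care needed is that the failure of a \emph{uniform} finite list is not literally the negation of a single first-order sentence, so the compactness setup must be arranged with some thought: the right move is to fix the number $r$ and the finite set $J$ and observe that ``for all $a$ with $\dim S(a)=0$, $S(a)\subseteq\bigcup_{k\le r}\phi_{i(k)}(a_k,M)$ for suitable $i(k)\in J$, $a_k$'' \emph{is} expressible by an $L_{\text{params}}$-sentence $\theta_{r,J}$ (using the definability of $\{a:\dim S(a)=0\}$ to handle the hypothesis), so if no finite list works then $\{\neg\theta_{r,J}: r\in\N,\ J\subseteq I \text{ finite}\}$ together with the theory of $(\cM,S)$ is finitely satisfiable — indeed each $\neg\theta_{r,J}$ is a consequence of failure — wait, this needs the observation that $\neg\theta_{r,J}$ holds in some model for each $(r,J)$, which follows from our assumption applied to the list exhausting $J$ with multiplicity $r$. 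Then compactness produces a model realizing all $\neg\theta_{r,J}$ simultaneously, i.e. a model $\cM'$ and parameter $a'$ with $\dim S(a')=0$ yet $S(a')$ not contained in any finite union of $\Phi$-sets — contradicting Lemma~\ref{dim3}. The main obstacle, and the place to be most careful, is exactly this bookkeeping: ensuring each $\neg\theta_{r,J}$ is genuinely realizable so that the type is finitely satisfiable, and confirming that simultaneous realization of all of them yields a section with no \emph{finite} cover (one must check that a single model realizing $\neg\theta_{r,J}$ for all $r$ and all finite $J$ does give a section escaping every finite union, which is where one uses that $I$ itself may be infinite but any actual cover uses finitely many indices).
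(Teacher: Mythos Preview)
Your overall strategy---a compactness/saturation argument exploiting that boundedness makes ``$\dim S(a)=0$'' a first-order condition, together with Lemma~\ref{dim3}---is exactly the paper's approach. But the execution has a real gap.

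The problem is the passage from ``all $\neg\theta_{r,J}$ hold in $\cM'$'' to ``there is a single parameter $a'$ with $\dim S(a')=0$ and $S(a')$ not covered by any finite union of $\Phi$-sets.'' Your $\theta_{r,J}$ are \emph{sentences}, so a model of all the $\neg\theta_{r,J}$ only gives you, for each $(r,J)$ separately, some witness $a_{r,J}$; nothing forces these to coincide. What you actually need is a type in the \emph{variable} $a$: the formula expressing $\dim S(a)=0$ together with, for each $(r,J)$, the formula $\neg\rho_{r,J}(a)$ saying ``for all tuples $a_1,\dots,a_r$ with indices in $J$, some $b\in S(a)$ escapes the union.'' (Note the auxiliary $b$ you mention cannot be a free variable of the type: a single $b$ cannot witness escape from \emph{every} choice of covering tuples, so $b$ must be quantified inside each $\neg\rho_{r,J}$.) Finite satisfiability of this type follows from your hypothesis, and then saturation---not compactness on sentences---yields the single $a'$ you want.

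There is a second subtlety you do not address. You invoke boundedness in the original $\cM$ to get a formula $\delta$ defining $\{a:\dim S(a)=0\}$, then pass to an extension $\cM'$. But boundedness only promises that this set is definable in each model separately; it does not say the same formula $\delta$ works in $\cM'$, so $\delta(a')$ need not imply $\dim_{\cM'}S'(a')=0$. The paper sidesteps this by passing \emph{first} to an $|M|^+$-saturated $(\cM^*,S^*)\succeq(\cM,S)$, applying boundedness \emph{there} to get the definable set $P=\{a\in(M^*)^n:\dim_{\cM^*}S^*(a)=0\}$, covering $P$ by the $M$-definable sets $P_{\i}$ (your $\rho$'s), and using saturation inside $(\cM^*,S^*)$ to extract a finite subcover. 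The conclusion for the original $\cM$ then follows by restricting to $a\in M^n$ and using Lemma~\ref{dim5} plus elementarity. If you reorganize your argument this way---saturate first, apply boundedness in the saturated model, run the type argument there---it goes through and is essentially identical to the paper's proof.
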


\begin{proof}
Suppose that $(\Phi, \Sigma)$ is bounded, $\cM=(M;\cdots)\models \Sigma$, and $S\subseteq M^{n+1}$ is definable. 
Let $(\cM^*,S^*)$ be an $|M|^+$-saturated elementary extension of 
$(\cM,S)$. For $a\in M^n$ we have $\dim_{\cM} S(a)= \dim_{\cM^*} S^*(a)$ by Lemma~\ref{dim5}. The set 
$$ P\ :=\ \{a\in (M^*)^n:\ \dim_{\cM^*}S^*(a)=0\}$$ is definable in $\cM^*$. For a tuple $\i=(i(1),\dots,i(r))\in I^r$ with $r\in \N^{\ge 1}$, let $P_{\i}$ be the set of all 
$a\in (M^*)^n$ with $S^*(a)\ne \emptyset$ for which there exist tuples $a_1\in (M^*)^{m_{i(1)}},\dots, a_r\in (M^*)^{m_{i(r)}}$ such that 
$$S^*(a)\ \subseteq\ \phi_{i(1)}(a_1,M^*)\cup \cdots \cup  \phi_{i(r)}(a_r,M^*).$$
Note that each set $P_{\i}$ is definable over $M$ in $\cM^*$, and that $P=\bigcup_{\i}P_\i$ by Lemma~\ref{dim3}. By saturation
this yields a finite set $F$ of tuples $\i$ as above such that
$P=\bigcup_{\i\in F} P_{\i}$. By applying this to tuples 
$a\in M^n$ this easily yields the ``only if'' direction. The converse is
clear. 
\end{proof}

\noindent
Arguing as in section 1 of \cite{D1} we get the following:

\begin{prop}\label{pro1} Suppose $(\Phi, \Sigma)$ is bounded. Let $\cM=(M;\cdots)$ be a model of $\Sigma$, and let $S\subseteq M^m$ and $f: S \to M^n$ be
 definable.  Then \begin{enumerate}
\item[\textup{(1)}] $\dim S \ge \dim f(S)$, in particular, $\dim S = \dim f(S)$ if $f$ is injective;
\item[\textup{(2)}] for $0\le i\le m$ the set $B_i:=\{b\in M^n:\ \dim f^{-1}(b)=i\}$ is 
definable, and $\dim f^{-1}(B_i) = i+ \dim B_i$.
\end{enumerate}
\end{prop}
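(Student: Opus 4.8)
The plan is to reduce both parts to fibre-wise applications of Lemma~\ref{dim8}, Lemma~\ref{dim3}, Lemma~\ref{dim4}, and Lemma~\ref{dim5}, mirroring the classical argument for the algebraic-geometric dimension (section~1 of \cite{D1}). For part~(1), first pass to a $\kappa$-saturated elementary extension $(\cM^*, S^*, \Gamma^*)$ of $(\cM, S, \Gamma_f)$ in the language $L$ augmented by names for $S$ and for the graph $\Gamma_f\subseteq M^{m+n}$ of $f$, with $\kappa>|M|$; by Lemma~\ref{dim5} all the relevant dimensions are unchanged, and $\Gamma^*$ is the graph of a definable map $f^*\colon S^*\to (M^*)^n$ extending $f$. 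Pick $t\in f(S)^{*}$ realizing the type witnessing $\dim f(S)=\rk(M^* t\mid M^*)$ (here I should be slightly careful: apply the saturation/realization criterion so that I may choose $t$ of the form $f^*(s)$ for some $s\in S^*$). Then $M^* t\subseteq M^* s$ gives $\rk(M^* s\mid M^*)\ge \rk(M^* t\mid M^*)$, hence $\dim S\ge \dim f(S)$; injectivity of $f$ yields a definable inverse, so the reverse inequality follows by symmetry, giving equality.

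For part~(2), definability of $B_i$ is the heart of the matter. Consider the definable set $T:=\{(b,a)\in M^n\times M^m : a\in f^{-1}(b)\}$, i.e. essentially the graph of $f$ with coordinates swapped, so that $T(b)=f^{-1}(b)$. Boundedness (in the form of Lemma~\ref{dim8}, applied to $T$ viewed inside $M^{n+m}$, iterated coordinate-by-coordinate over the $m$ fibre variables — here one uses Lemma~\ref{dim4} to detect $\dim T(b)=0$ via each of the $m$ projections having $0$-dimensional fibre, as recorded just after Lemma~\ref{dim4}) shows that $\{b: \dim f^{-1}(b)=0\}$ is definable; then one peels off dimensions one at a time. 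Concretely, to get $\{b:\dim f^{-1}(b)\le i\}$ definable, reduce to the previous case by replacing $f$ with its composition with a generic coordinate projection: $\dim f^{-1}(b)\le i$ iff for ``almost all'' choices of $m-i$ of the $m$ coordinates the corresponding projection of $f^{-1}(b)$ has dimension $0$, using Lemma~\ref{dim2}(v),(vi) and the fact that a single projection can always be arranged to drop dimension to the target. Since there are only finitely many coordinate subsets to quantify over, this stays first-order, and intersecting/differencing gives each $B_i$ definable.

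For the dimension formula $\dim f^{-1}(B_i)=i+\dim B_i$, the inequality $\le$ follows from part~(1) applied to the projection $f^{-1}(B_i)\to B_i$ together with $\dim$ of a set with all fibres of dimension $i$ being at most $i+\dim(\text{base})$ (an additivity/saturation argument as in the proof of Lemma~\ref{dim4}, choosing a base point $b$ of maximal rank and then, after a further elementary extension, a point of the fibre $f^{-1}(b)$ of rank $i$ over it). The inequality $\ge$ is the same construction run in reverse: over a rank-$\dim B_i$ point $b\in B_i^{**}$ the fibre $f^{-1}(b)$ has dimension $i$, so by Lemma~\ref{dim5} (applied to the definable family) and independence of the fibre coordinates from $M\,b$ one produces a point of $f^{-1}(B_i)$ of rank $\ge i+\dim B_i$ over $M$. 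I expect the main obstacle to be the definability of $B_i$ for $i>0$: getting the ``generic projection drops dimension by the right amount'' step to be genuinely uniform and first-order, rather than just true point-by-point, is the step that requires the most care, and it is exactly where boundedness (via Lemma~\ref{dim8}) is indispensable.
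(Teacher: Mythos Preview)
The paper offers no proof beyond ``Arguing as in section~1 of \cite{D1}'', so there is little to compare against; your overall plan---reduce (1) and the dimension formula in (2) to rank additivity in a saturated extension, and extract definability of the $B_i$ from Lemma~\ref{dim8}---is the intended one.

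There is, however, a genuine gap. First a notational slip: your ranks should be over $M$, not $M^*$, since by definition $\dim S=\max\{\rk(Ms\mid M):s\in S^*\}$ with the rank taken in the pregeometry of $\cM^*$ but over the base set $M$. More seriously, the step you write as ``$M^*t\subseteq M^*s$'' amounts to the claim that each coordinate $t_j=f^*(s)_j$ lies in $\Phi\big(M\cup\{s_1,\dots,s_m\}\big)$, and boundedness alone does not give this: Lemma~\ref{dim8} only places $t_j$ in some $\phi_i(c,M^*)$ with the witnessing tuple $c$ ranging over $M^*$, not over $Ms$. The step genuinely fails in general. Take $\cM=(\Z,s)$ with successor $s$, $\Sigma=\operatorname{Th}(\cM)$, and let $\Phi$ consist of the single formula $y=x_1$, so that $\Phi(A)=A$; this $(\Phi,\Sigma)$ is nontrivial and bounded (strong minimality yields uniform finiteness of the $0$-dimensional fibres), yet the definable bijection $x\mapsto(x,s(x))$ from $M$ onto its graph $\Gamma\subseteq M^2$ has $\dim M=1<2=\dim\Gamma$, so both (1) and the formula in (2) fail. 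The same unjustified step is buried in your saturation argument for the fibre formula in~(2).

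What the argument of \cite{D1} actually relies on, and what holds in every example this paper considers (where $\Phi$-closure contains model-theoretic $\operatorname{acl}$), is the extra hypothesis $\operatorname{dcl}(A)\subseteq\Phi(A)$ for every parameter set $A$. With that in hand, $t_j\in\Phi(Ms)$ is immediate and your argument for (1) is correct; the one-variable-fibre case of the formula in (2) follows the same way, and the general case then goes by induction on $m$ exactly as in \cite{D1}. Your treatment of the definability of the $B_i$ is fine as written.
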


\begin{cor}\label{corpro1} Suppose $(\Phi, \Sigma)$ is bounded. Let $\cM=(M;\cdots)$ be a model of $\Sigma$, let $D\subseteq M$ be $0$-definable with $\dim D=0$, and let the nonempty set
$S\subseteq M^m$ be definable and
co-analyzable relative to $D$. Then $\dim S=0$.
\end{cor}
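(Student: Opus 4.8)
The plan is to induct on the number $r$ of steps in the co-analyzability of $S$ relative to $D$, using Proposition \ref{pro1} as the engine at each step. For the base case $r=0$, the set $S$ is finite and nonempty, so $\dim S = 0$ by Lemma \ref{dim2}(i). For the inductive step, suppose $S$ is co-analyzable relative to $D$ in $r+1$ steps and the claim holds for sets co-analyzable in $r$ steps. By (C$_{r+1}$) there is a definable $R\subseteq D\times M^m$ such that the projection $\pi\colon D\times M^m \to M^m$ maps $R$ onto $S$, and each section $R(c)$ for $c\in D$ is co-analyzable relative to $D$ in $r$ steps. We may assume each such section is nonempty (discard the $c$ for which it is empty; this only shrinks $R$ and does not change $\pi(R)=S$, since we can arrange $R$ so that every fiber $R(c)$ is nonempty — or simply note that the empty sections contribute nothing to the projection). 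By the inductive hypothesis, $\dim R(c)=0$ for every $c\in D$.

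Next I would bound $\dim R$. Write $p\colon D\times M^m \to D$ for the projection onto the first coordinate, so that $p^{-1}(c)\cap R$ is (a copy of) $R(c)$, of dimension $0$, for each $c\in D$; equivalently, thinking of $R$ itself as the domain, the fibers of $p\restrict R$ over points of $D$ all have dimension $0$. Applying Proposition \ref{pro1}(2) to the definable map $p\restrict R\colon R \to M$ (with target the one-dimensional factor), the set $B_0=\{c\in M : \dim (p\restrict R)^{-1}(c)=0\}$ is definable and $\dim (p\restrict R)^{-1}(B_0) = 0 + \dim B_0 = \dim B_0$. Since every fiber over a point of $D$ has dimension $0$, we have $D\cap (\text{image of }p\restrict R)\subseteq B_0$, hence $(p\restrict R)^{-1}(B_0) \supseteq R$ (as $R$ lives entirely above $D$), so $R = (p\restrict R)^{-1}(B_0)$ and $\dim R = \dim B_0$. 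But $B_0 \subseteq M$ and $B_0$ is contained in $D$ up to the points not in the image, so $\dim B_0 \le \dim D = 0$; therefore $\dim R = 0$. Finally, $S = \pi(R)$ with $\pi$ definable, so Proposition \ref{pro1}(1) gives $\dim S \le \dim R = 0$, and since $S\neq\emptyset$ we get $\dim S = 0$, completing the induction.

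The main obstacle I anticipate is the bookkeeping around which ambient space to regard $R$ as sitting in when applying Proposition \ref{pro1}(2): the clean statement concerns a definable map out of a definable set, so one wants to view $p\restrict R$ as a map $R\to M$ and correctly identify its fibers with the sections $R(c)$, being careful that $R$ may project to a proper definable subset of $D$ and that the "fiber dimension $0$" hypothesis only needs to hold on the relevant part of the domain. A secondary point requiring a line of care is the reduction to nonempty sections so that the inductive hypothesis (which is stated for nonempty co-analyzable sets) applies; this is harmless because sections that are empty do not contribute to $\pi(R)$. Everything else is a direct appeal to the cited lemmas.
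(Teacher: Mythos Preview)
Your proof is correct and follows the same approach as the paper, which merely says this is an ``easy induction on $r\in\N$ using Proposition~\ref{pro1}''; you have supplied the details of that induction. The one point you omit is the paper's opening step of passing to an $\omega$-saturated elementary extension of $\cM$ (so that the recursive clauses (C$_0$), (C$_{r+1}$) defining co-analyzability are available), using Lemma~\ref{dim5} to see that $\dim S$ is unchanged.
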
 
\begin{proof} By passing to a suitable elementary extension of 
$\cM$ we can arrange that $\cM$ is $\omega$-saturated. Then
an easy induction on $r\in \N$ using Proposition~\ref{pro1} shows that if $S$ is co-analyzable in $r$ steps relative to $D$, then
$\dim S =0$.  
\end{proof}

\medskip\noindent
{\bf Examples.}

\medskip\noindent
3. Let $L$ be the language of rings, let $\text{ACF}$ be the usual set of axioms (in $L$) for algebraically closed fields, and let $\Phi_{\FL}$ be as in 
Example 2. Then $(\Phi_{\FL},\operatorname{ACF})$ defines a pregeometry, and $(\Phi_{\FL},\operatorname{ACF})$ is bounded: more precisely, by \cite{D1} there are for 
any $L$-formula $\phi(x_1,\dots, x_n,y)$ polynomials $P_1,\dots,P_r\in \Z[X_1,\dots,X_n,Y]$ such that for all
$K\models \operatorname{ACF}$ and $a\in K^n$, if $\phi(a,K)$ is finite, then $$\phi(a,K)\ \subseteq\ \{b\in K:\ P_i(a,b)=0\}$$ for some $i\in \{1,\dots,r\}$ with $P_i(a,Y)\neq 0$. The
dimension function associated to $(\Phi_{\FL}, \operatorname{ACF})$ is called
{\em algebraic dimension\/}, and is denoted by $\text{algdim}_K$ if we wish to indicate the dependence on $K$. Note that for $K\models  \operatorname{ACF}$ and
nonempty 
definable $S\subseteq K^n$ we have: $\text{algdim}_K S$ is the largest $m$ for which there exist polynomials $P_1,\dots,P_m \in K[X_1,\dots,X_n]$ that are algebraically independent on $S$; the latter means that for every $f\in K[Y_1,\dots,Y_m]^{\neq}$ there exists an $s\in S$ such that
 $f(P_1(s),\dots,P_m(s))\ne 0$.

\medskip\noindent
Among several variants of the above in \cite{D1}, we indicate one: let $L$ be the language of ordered rings (the language of rings augmented by $\le$), and let  $\text{RCF}$ be the usual set of axioms in this language for real closed ordered fields.
Then the above statements for $(\Phi_{\FL},\operatorname{ACF})$
go through for $(\Phi_{\FL},\operatorname{RCF})$.

\medskip\noindent
4. Let $L_{\der}$ be the language of rings augmented by the unary function symbol $\der$, and let $\Q\{Y_1,\dots,Y_n\}$ be the (differential) ring of differential polynomials in $Y_1,\dots,Y_n$
over $\Q$ (with the trivial derivation on $\Q$). To each differential polynomial $P(X_1,\dots, X_m,Y)\in \Q\{X_1,\dots,X_m,Y\}$ we associate an $L_{\der}$-formula
$$ P(x_1,\dots, x_m,y)=0\ \wedge\  P(x_1,\dots, x_m,Y)\ne 0$$
to be denoted by $\phi_P(x_1,\dots,x_m,y)$. 
Let 
$\Phi_{\DF}$ the family $\big(\phi_P(x_1,\dots,x_m,y)\big)$ indexed by the above
$(m,P)$. Let $\DF$ be the usual set of $L_{\der}$-sentences whose models are the differential fields of characteristic $0$ (with $\der$ interpreted as the distinguished derivation). Then $(\Phi_{\DF},\DF)$ defines a pregeometry. Let now $\DCF$ be a set of
$L_{\der}$-sentences whose models are the differentially closed fields. Then $(\Phi_{\DF},\DCF)$ is bounded, as pointed out in [1]. 


\section{Pairs of algebraically closed fields}\label{pairs}

\medskip\noindent
Let $L_U$ be the language of rings augmented by a unary relation symbol $U$. Let $\ACF2$ be a set of $L_U$-sentences whose models are the pairs $(K,\k)$ of algebraically closed fields; recall from the introduction that this includes the requirement that $\k$ is a {\em proper\/} algebraically closed subfield of $K$. 

\subsection*{A pregeometry for pairs of algebraically closed fields}
To any $P(X,Y,Z)\in \Z[X,Y,Z]$ with $X=(X_1,\dots,X_m)$, $Y=(Y_1,\dots,Y_n)$, and $Z$ a single indeterminate, we associate the $L_U$-formula $\phi_P(y,z)$ given by
$$ \exists x \big(U(x)\ \wedge\ P(x,y,z)=0\ \wedge\ P(x,y,Z)\neq 0\big),$$
where $\exists x$ abbreviates $\exists x_1\dots\exists x_m$ and
$U(x)$ stands for $U(x_1)\wedge \dots \wedge U(x_m)$.
Let $\Phi_{2}$ be the family $\big(\phi_P(y,z)\big)$ indexed by the above $(m,n,P)$. Let $(K,\k)$ be a pair of algebraically closed fields. For a set $A\subseteq K$ the $\Phi_{2}$-image of $A$, 
$$\Phi_2(A)\ :=\ \lbrace b\in K:\ (K,\k)\models \phi_P(a,b) \text{ for some } \phi_P  \text{ as above and some } a \in  A^n\rbrace$$
\medskip\noindent
is  the algebraic closure of $\k(A)$ in $K$. Thus $(\Phi_2,\ACF2)$ defines a pregeometry. For sets $A\subseteq B\subseteq K$ we have
$$\rk_2(B|A)\ =\ \trdeg(\k(B)|\k(A))$$
where $\text{rk}_2$ refers here to the rank associated to the pregeometry on the set $K$ defined by $(\Phi_2, \ACF2)$,
and $\trdeg(F|E)$ is the transcendence degree of a field $F$ over a subfield $E$. This allows us to assign to every $S\subseteq K^n$ a dimension $\dim_2 S\in \N\cup\{-\infty\}$
as described in Section 1:

\begin{definition} Let $S\subseteq K^n$ and let $(K^*,\k^*,S^*)$ be a $\kappa$-saturated elementary extension of $(K,\k,S)$, with $\kappa > |K|$. If $S\ne \emptyset$ we set
$$\dim_2 S\ :=\ \max\lbrace \text{rk}_2(Ks | K): s \in S^*\rbrace,$$
\medskip\noindent
where the rank is with respect to the $(\Phi_2,\ACF2)$-pregeometry on $K^*$, and if $S=\emptyset$, then $\dim_2 S:=-\infty$. In more familiar terms, if $S\ne \emptyset$, then
$$ \dim_2 S\ =\ \max\{\trdeg\big(K\k^*(s)|K\k^*\big):\ s\in S^*\}$$
where $K\k^*$ denotes the subfield of $K^*$ generated by $K$ and $\k^*$.
\end{definition}

\medskip\noindent
In particular, Lemmas \ref{dim2}, \ref{dim3}, \ref{dim5}, and~\ref{dim4}
hold for our dimension $\dim_2$. 

\medskip\noindent
For example, if $S\subseteq K$ and $\dim_2 S=0$, then $S$ is contained in a finite union of $\Phi_2$-sets by Lemma~\ref{dim3}, and so there are polynomials $P_1, \dots, P_r\in \Z[X,Y,Z]$ with $r\in \N$ and $X, Y, Z$ as above, and a tuple $a\in K^{n}$ such that for all $b\in S$,
$$P_i(u,a,b)=0, \qquad  P_i(u,a,Z)\neq 0$$ for some
$i\in \{1,\dots,r\}$ and some $u\in \k^{m}$.

\medskip\noindent
\begin{remark} The characterization of $\dim_2$ in terms of transcendence degree yields $\dim_2 \k=0$. A remark at the end of section 2 in \cite{D2} gives $\dim_2 K>0$. Thus $(\Phi_2, \ACF2)$
is nontrivial. In particular, $\dim_2 K^n =n$.
\end{remark}

\subsection*{Boundedness of the pregeometry}
To prove that $(\Phi_2, \ACF2)$ is bounded we use a quantifier reduction for $\ACF2$, as in Lemma~\ref{dim9} below. Let $y=(y_1,\dots, y_n)$ be a tuple of distinct variables.
A {\it special\/} $L_U$-formula in $y$ is an $L_U$-formula 
$$(*) \qquad \qquad \exists x \big(U(x) \wedge \phi(x,y)\big)\qquad \qquad$$ 
where $x=(x_1,\dots,x_m)$ and $\phi(x,y)$ is an $L$-formula. If $(K,\k)$ is a pair of algebraically closed fields, then $(K,\k)$ is a model of the $L_U$-theory of {\em algebraically closed fields with a small multiplicative set\/} as  defined in \cite{D2}. Thus, as a special case of \cite[Theorem 3.8]{D2} we have:

\begin{lemma}\label{dim9} Every $L_U$-formula $\psi(y)$ is 
$\ACF2$-equivalent to a boolean combination of special $L_U$-formulas in $y$.
\end{lemma}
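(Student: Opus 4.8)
The plan is to quote the known quantifier-simplification theorem for the theory of algebraically closed fields equipped with a ``small'' multiplicative subset and transfer it to our setting. First I would recall from \cite{D2} that a pair $(K,\k)$ of algebraically closed fields is, in particular, a model of the theory of an algebraically closed field together with a distinguished multiplicative subset that is \emph{small} in the sense of that paper: here the distinguished subset $U$ is the full subfield $\k$, which is certainly multiplicatively closed, and smallness holds because $\k$ is a proper algebraically closed subfield, so it omits enough elements to meet the smallness requirement of \cite{D2}. Thus every $L_U$-formula that is a theorem-instance of that more general theory applies verbatim to $\ACF2$.

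The substantive input is \cite[Theorem 3.8]{D2}, which says that modulo the theory of algebraically closed fields with a small multiplicative set, every $L_U$-formula $\psi(y)$ is equivalent to a boolean combination of formulas of the shape $\exists x\,(U(x)\wedge \phi(x,y))$ with $\phi$ an $L$-formula (the ``special'' formulas). Since $\ACF2$ extends that theory, $\ACF2$-equivalence follows immediately from equivalence modulo the weaker theory. So the proof is essentially: verify that $(K,\k)\models \ACF2$ implies $(K,\k)$ is a model of the base theory of \cite{D2} (smallness of $\k$), then invoke \cite[Theorem 3.8]{D2}.

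The only place where care is needed — and what I expect to be the main obstacle — is matching the precise hypotheses of \cite[Theorem 3.8]{D2} with our situation, in particular confirming that a proper algebraically closed subfield qualifies as a ``small multiplicative set'' in the technical sense of that paper (rather than merely being multiplicatively closed), and checking that the class of special formulas in \cite{D2} is literally the class $(*)$ stated here (same quantifier pattern, $U$ applied coordinatewise, inner matrix an arbitrary $L$-formula). Once these identifications are in place, no further argument is required: the lemma is a direct specialization. I would therefore write the proof as a short paragraph recording these two checks and citing \cite[Theorem 3.8]{D2}, exactly as the surrounding text already anticipates.
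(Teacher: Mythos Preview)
Your proposal is correct and matches the paper's approach exactly: the paper simply observes that any pair $(K,\k)$ of algebraically closed fields is a model of the $L_U$-theory of algebraically closed fields with a small multiplicative set from \cite{D2}, and then states Lemma~\ref{dim9} as a special case of \cite[Theorem 3.8]{D2}. There is no further argument in the paper beyond this citation, so your write-up is if anything more detailed than what appears there.
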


\noindent
Any conjunction as well as any disjunction of special 
$L_U$-formulas in $y$ is clearly equivalent to a special $L_U$-formula in $y$. Using that ACF has QE, every special $L_U$-formula in $y$
is $\ACF2$-equivalent to a disjunction of special $L_U$-formulas
in $y$ as in $(*)$ where $\phi(x,y)$ is a conjunction 
$$(**)\qquad \qquad P_1(x,y)=\cdots=P_r(x,y)=0\ \wedge\ Q(x,y)\ne 0, \qquad \qquad$$
with $r\in \N^{\ge 1}$ and $P_1,\dots,P_r, Q\in \Z[X,Y]$. 
We say that an $L_U$-formula $\psi(y)$ is {\it very special\/} 
if it is a formula
as in $(*)$ with $\phi(x,y)$ as as in $(**)$.  
Using the remarks above and Lemma~\ref{dim9} we obtain:

\begin{lemma}\label{dim9+} Every $L_U$-formula
$\psi(y)$ is $\ACF2$-equivalent to a disjunction of formulas
$$\psi_0(y) \wedge \neg \psi_1(y) \wedge \cdots \wedge \neg \psi_r(y)$$ 
where $r\in \N^{\ge 1}$ and the $\psi_i(y)$ are very special $L_U$-formulas.
\end{lemma}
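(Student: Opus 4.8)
The plan is to reduce this statement to Lemma~\ref{dim9} by routine propositional manipulations, the only substantive additional ingredient being the observation (recorded just before the statement) that conjunctions and disjunctions of special $L_U$-formulas are again equivalent to special ones, and that via QE for $\mathrm{ACF}$ every special formula is $\ACF2$-equivalent to a disjunction of very special ones. So the argument is a bookkeeping exercise: start from a boolean combination of special formulas, push it into disjunctive normal form, and then replace positive and negative literals appropriately.

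First I would invoke Lemma~\ref{dim9} to write $\psi(y)$ as a boolean combination $\beta$ of special $L_U$-formulas $\theta_1(y),\dots,\theta_k(y)$ in $y$. Rewriting $\beta$ in disjunctive normal form over the literals $\theta_j$, $\neg\theta_j$, we get that $\psi(y)$ is $\ACF2$-equivalent to a finite disjunction of conjunctions, each of the form
$$\bigwedge_{j\in A}\theta_j(y)\ \wedge\ \bigwedge_{j\in B}\neg\theta_j(y)$$
for disjoint index sets $A,B\subseteq\{1,\dots,k\}$. For each such conjunct I would first replace $\bigwedge_{j\in A}\theta_j(y)$ by a single special formula $\psi_0(y)$, using that a conjunction of special formulas is equivalent to a special one (if $A=\emptyset$ take $\psi_0$ to be a trivially true special formula, e.g. $\exists x\,(U(x)\wedge x=x)$, which is $\ACF2$-valid since $U$ is nonempty in any model of $\ACF2$). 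Then, using QE for $\mathrm{ACF}$ as in the paragraph preceding the statement, each special $\theta_j$ with $j\in B$ is $\ACF2$-equivalent to a \emph{disjunction} of very special formulas $\theta_j\equiv\bigvee_\ell\chi_{j,\ell}(y)$, hence $\neg\theta_j\equiv\bigwedge_\ell\neg\chi_{j,\ell}(y)$. Substituting and distributing the outer disjunction over these conjunctions leaves each conjunct in the shape
$$\psi_0(y)\ \wedge\ \neg\psi_1(y)\ \wedge\ \cdots\ \wedge\ \neg\psi_r(y)$$
with all $\psi_i$ very special; collecting all conjuncts into one big disjunction yields the claimed normal form. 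If some conjunct has $r=0$ (no negated literal), I would pad it with $\neg\psi_1(y)$ where $\psi_1$ is a contradictory very special formula such as $\exists x\,(U(x)\wedge 0=0\wedge 1\ne 0)$, whose negation is $\ACF2$-valid, so that the requirement $r\in\N^{\ge 1}$ is met without changing the defined set.

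There is no real obstacle here: the only nontrivial input, Lemma~\ref{dim9}, is already available, and everything else is propositional normalization plus the already-stated closure of special formulas under $\wedge,\vee$ and their reduction, via QE for $\mathrm{ACF}$, to very special form. The one point that deserves a sentence of care is the handling of the degenerate cases ($A=\emptyset$, or a conjunct with no negated literal), which is why I would explicitly exhibit the trivially true and trivially false very special formulas above so that the stated shape with $r\ge 1$ and a genuine $\psi_0$ is literally achieved.
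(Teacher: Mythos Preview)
Your approach is exactly the one the paper intends: the paper's own ``proof'' is just the sentence ``Using the remarks above and Lemma~\ref{dim9} we obtain,'' and your write-up spells out precisely those remarks (closure of special formulas under $\wedge,\vee$, reduction of special to disjunctions of very special via QE for $\mathrm{ACF}$) together with the DNF step.

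There is one small slip in the execution. After combining the positive literals you produce a single \emph{special} formula $\psi_0(y)$, and you then only apply the ``special $\Rightarrow$ disjunction of very special'' reduction to the negated $\theta_j$'s. When you conclude ``with all $\psi_i$ very special'' that is not yet justified for $\psi_0$; as written, your $\psi_0$ is merely special (and your trivially true placeholder $\exists x\,(U(x)\wedge x=x)$ is likewise not in the very special shape $P_1=\cdots=P_r=0\wedge Q\ne 0$ with $r\ge 1$). The fix is the one you already announced in your overview: also replace $\psi_0$ by a disjunction $\bigvee_m\sigma_m$ of very special formulas and distribute once more, so each conjunct becomes $\sigma_m\wedge\neg\psi_1\wedge\cdots\wedge\neg\psi_r$. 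With that extra line the argument is complete and matches the paper.
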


\noindent
Let $z$ be a new variable, and consider a very special $L_U$-formula 
$$\psi(y,z):  \quad\exists x\big(U(x) \wedge   P_1(x,y,z)=\cdots=P_r(x,y,z)=0 \wedge Q(x,y,z)\ne 0\big)$$
with $P_1,\dots, P_r, Q\in \Z[X,Y,Z],\ X=(X_1,\dots, X_m),\  Y=(Y_1,\dots, Y_n)$.

\begin{lemma}\label{dim9*} Let $(K,\k)$ be a pair of algebraically closed fields.
Then for all $a\in K^n$, either $\dim_2\psi(a,K)\le 0$, or
$\psi(a,K)$ is a cofinite subset of $K$. The set
$$\{a\in K^n:\  \dim_2\psi(a,K)\le 0\}$$
is defined in $(K,\k)$ by a formula $\psi^*(y)$ of the form
$\forall x\big(U(x)\to \phi(x,y)\big)$
where $\phi(x,y)$ is a quantifier-free $L$-formula.
\end{lemma}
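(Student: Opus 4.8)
The plan is to reduce everything, for a fixed $a\in K^n$, to one case distinction about the polynomials $P_1(u,a,Z),\dots,P_r(u,a,Z),Q(u,a,Z)$, regarded as one-variable polynomials in $Z$ over $K$, as $u$ ranges over $\k^m$. Write $P_i=\sum_j c_{ij}(X,Y)Z^j$ and $Q=\sum_j e_j(X,Y)Z^j$ with $c_{ij},e_j\in\Z[X,Y]$; then ``$P_i(u,a,Z)=0$'' abbreviates the quantifier-free condition $\bigwedge_j c_{ij}(u,a)=0$ and ``$Q(u,a,Z)\neq 0$'' abbreviates $\bigvee_j e_j(u,a)\neq 0$. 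The decisive property of $a$ is
$$(\dagger)\qquad \exists\, u\in\k^{m}\colon\ P_1(u,a,Z)=\cdots=P_r(u,a,Z)=0\ \wedge\ Q(u,a,Z)\neq 0.$$
I would show that $(\dagger)$ implies $\psi(a,K)$ is cofinite (and hence $\dim_2\psi(a,K)>0$), while $\neg(\dagger)$ implies $\dim_2\psi(a,K)\le 0$; this gives the dichotomy and identifies $\{a\in K^n:\dim_2\psi(a,K)\le 0\}$ as $\{a: (\dagger)\text{ fails at }a\}$.

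First I would handle the case that $(\dagger)$ holds, say with witness $u\in\k^m$. Then $Q(u,a,Z)$ is a nonzero polynomial in $Z$ over $K$, so its zero set in $K$ is finite, and for every $b\in K$ outside it one has $P_i(u,a,b)=0$ for all $i$ and $Q(u,a,b)\neq 0$, i.e.\ $b\in\psi(a,K)$. Thus $\psi(a,K)$ is cofinite; writing $K=\psi(a,K)\cup(K\setminus\psi(a,K))$ with finite complement, Lemma~\ref{dim2}(ii) together with $\dim_2 K=1$ forces $\dim_2\psi(a,K)=1$.

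Next I would treat the case that $(\dagger)$ fails. Let $b\in\psi(a,K)$ and pick $u\in\k^m$ with $P_i(u,a,b)=0$ for all $i$ and $Q(u,a,b)\neq 0$. Then $Q(u,a,Z)\neq 0$ as a polynomial in $Z$, so failure of $(\dagger)$ forces $P_i(u,a,Z)\neq 0$ as a polynomial in $Z$ for some $i\in\{1,\dots,r\}$; for that $i$ the same $u$ witnesses $(K,\k)\models\phi_{P_i}(a,b)$, so $b\in\phi_{P_i}(a,K)$. Hence
$$\psi(a,K)\ \subseteq\ \phi_{P_1}(a,K)\cup\cdots\cup\phi_{P_r}(a,K),$$
a finite union of $\Phi_2$-sets, and Lemma~\ref{dim3} (which holds for $\dim_2$) yields $\dim_2\psi(a,K)\le 0$, the empty case being covered by $\dim_2\emptyset=-\infty$.

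Finally, ``$(\dagger)$ fails at $a$'' says, by De Morgan, that every $u\in\k^m$ satisfies $P_1(u,a,Z)\neq 0$ or $\cdots$ or $P_r(u,a,Z)\neq 0$ or $Q(u,a,Z)=0$. So, with
$$\phi(x,y)\ :=\ \Big(\bigvee_{i=1}^{r}\bigvee_j c_{ij}(x,y)\neq 0\Big)\ \vee\ \Big(\bigwedge_j e_j(x,y)=0\Big)$$
a quantifier-free $L$-formula, the $L_U$-formula $\psi^{*}(y):=\forall x\big(U(x)\to\phi(x,y)\big)$ defines $\{a\in K^n:\dim_2\psi(a,K)\le 0\}$ in $(K,\k)$ and has the required shape. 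The one step that needs a moment's thought — rather than a genuine obstacle — is the middle one: recognizing that when $(\dagger)$ fails the very polynomials $P_1,\dots,P_r$ appearing in $\psi$ already furnish a finite list of $\Phi_2$-sets covering $\psi(a,K)$, so that Lemma~\ref{dim3} applies uniformly; cofiniteness in the other case and the translation into a universally quantified quantifier-free formula are then routine bookkeeping.
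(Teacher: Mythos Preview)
Your proof is correct and follows essentially the same approach as the paper: both hinge on the case distinction $(\dagger)$, obtain cofiniteness from a witness $u$ with all $P_i(u,a,Z)=0$ and $Q(u,a,Z)\ne 0$, and in the complementary case observe that any witness $u$ for $b\in\psi(a,K)$ must have some $P_i(u,a,Z)\ne 0$, so $b$ lies in the $\Phi_2$-set $\phi_{P_i}(a,K)$. Your formula $\phi(x,y)$ is logically equivalent to the paper's, and you are slightly more explicit than the paper in invoking Lemma~\ref{dim3} and in checking that cofiniteness forces $\dim_2=1$ (needed to conclude that $\{a:\dim_2\psi(a,K)\le 0\}$ coincides exactly with $\{a:(\dagger)\text{ fails}\}$).
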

\begin{proof} Let $a\in K^n$. Suppose that for some $u\in \k^m$
we have $$P_1(u,a,Z)=\cdots=P_r(u,a,Z)=0, \quad  Q(u,a,Z)\ne 0.$$
Then $\psi(a,K)$ is a cofinite subset of $K$ with $|K\setminus \psi(a,K)|\le \deg_Z Q(u,a,Z)$.

If there is no such $u$, then for every $u\in \k^m$, either
some $P_i(u,a,Z)\ne 0$ or $P_1(u,a,Z)=\cdots = P_r(u,a,Z)=Q(u,a,Z)=0$, and then
$\dim_2\psi(a,K)\le 0$. 

Thus $\{a\in K^n:\  \dim_2\psi(a,K)\le 0\}$ is defined in $(K,\k)$ by the formula
$\forall x\big(U(x) \to \phi(x,y)\big)$, where $\phi(x,y)$ is the formula
$$\bigvee_{i=1}^r \big(P_i(x,y,Z)\ne 0\ \vee\ P_1(x,y,Z)=\cdots = P_r(x,y,Z)=Q(x,y,Z)=0\big).$$ 
Thus $\psi^*(y)$ can be taken to depend only on $\psi(y,z)$, not on $(K,\k)$.
\end{proof}

\noindent
{\em Proof of Proposition~\ref{p}}. The
collection of sets $S\subseteq K$ that are definable in
$(K,\k)$ with $\dim_2 S \le 0$ or $\dim_2 (K\setminus S)\le 0$ is easily seen to be a boolean algebra of subsets of $K$. By Lemma~\ref{dim9*} this boolean algebra contains every set $\psi(a,K)$
where $\psi(y,z)$ is a very special $L_U$-formula and $a\in K^n$.
By Lemma~\ref{dim9+} this boolean algebra contains every set $S\subseteq K$ that is definable in $(K,\k)$. \qed

\begin{prop}\label{pro2} $(\Phi_2, \ACF2)$ is bounded.
\end{prop}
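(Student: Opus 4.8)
The plan is to reduce the statement to the special case already handled by Lemma~\ref{dim9*}, using Lemma~\ref{dim8} to repackage boundedness in the form that is most convenient here. Concretely, I would use the characterization from Lemma~\ref{dim8}: since $(\Phi_2,\ACF2)$ is nontrivial (by the Remark recording $\dim_2\k=0$ and $\dim_2 K>0$), it suffices to show that for every model $(K,\k)$ of $\ACF2$ and every definable $S\subseteq K^{n+1}$, there is a finite list of indices $\i=(i(1),\dots,i(r))$ for $\Phi_2$ such that whenever $\dim_2 S(a)=0$, the fiber $S(a)$ is covered by $\Phi_2$-sets with those indices. Equivalently by Lemma~\ref{dim3}, I need: the set $\{a\in K^{n+1-1}:\dim_2 S(a)\le 0\}$ is definable, with a ``uniform'' witness for the covering on the $\dim_2\le 0$ fibers.

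First I would apply Lemma~\ref{dim9+} to the $L_U$-formula $\psi(y,z)$ defining $S$ (where $y=(y_1,\dots,y_n)$ names the parameter coordinates and $z$ the last coordinate): $S$ is, up to $\ACF2$-equivalence, a finite union of sets of the form $\{(a,b):(K,\k)\models\psi_0(a,b)\wedge\neg\psi_1(a,b)\wedge\cdots\wedge\neg\psi_k(a,b)\}$ with each $\psi_j$ very special in $(y,z)$. Fix such a Boolean combination and call its fiber over $a$ the set $S_0(a)=\psi_0(a,K)\setminus(\psi_1(a,K)\cup\cdots\cup\psi_k(a,K))$. By Lemma~\ref{dim9*}, each $\psi_j(a,K)$ is either cofinite in $K$ or has $\dim_2\le 0$; moreover the dichotomy is governed definably by the formula $\psi_j^*(y)$ of the displayed $\forall x(U(x)\to\cdots)$ shape, and on the ``$\dim_2\le 0$'' side Lemma~\ref{dim9*} (via Lemma~\ref{dim3}) supplies a finite list of $\Phi_2$-indices covering $\psi_j(a,K)$, depending only on $\psi_j$. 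Now do a case split on the truth value of the tuple $(\psi_0^*(a),\dots,\psi_k^*(a))\in\{0,1\}^{k+1}$, a definable partition of $K^n$: if $\psi_0^*(a)$ holds, i.e.\ $\dim_2\psi_0(a,K)\le 0$, then $S_0(a)\subseteq\psi_0(a,K)$ is covered by the fixed finite list of $\Phi_2$-sets attached to $\psi_0$, so $\dim_2 S_0(a)=0$ automatically; if instead $\psi_0^*(a)$ fails, then $\psi_0(a,K)$ is cofinite, hence $S_0(a)$ has $\dim_2\le 0$ precisely when $\psi_0(a,K)\setminus(\psi_1(a,K)\cup\cdots\cup\psi_k(a,K))$ is finite, which (since each $\psi_j(a,K)$ is cofinite-or-small) happens exactly when every $\psi_j(a,K)$ with $\psi_j^*(a)$ false is ``missing only finitely much'' — in fact this forces all of $\psi_1(a,K),\dots,\psi_k(a,K)$ with $\psi_j^*(a)$ false to be cofinite and their complements to cover the cofinite set $\psi_0(a,K)$ up to a finite set, a condition I would express by a further $\ACF2$-formula in $a$ obtained from the $P_i,Q$'s defining the $\psi_j$. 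On this ``bad'' part of parameter space where nonetheless $\dim_2 S_0(a)\le 0$, the finite set $S_0(a)$ is a subset of $K\setminus(\text{cofinite set})$, hence contained in the zero set of a single polynomial whose degree is bounded in terms of the $\deg_Z Q_j$'s, giving again a uniform finite list of $\Phi_2$-indices (indeed ordinary $\Phi_{\FL}$-style zero-set conditions, which are instances of $\Phi_2$-sets with $m=0$). Taking the union over all the cases and over the finitely many Boolean combinations produced by Lemma~\ref{dim9+} yields one global finite list of indices, which is exactly what Lemma~\ref{dim8} demands.

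The main obstacle I anticipate is the bookkeeping in the second case — verifying that when $\psi_0(a,K)$ is cofinite, the condition ``$S_0(a)$ is finite'' is definable and comes with a uniform bound on $|S_0(a)|$ so that the covering $\Phi_2$-list can be chosen independently of $a$. This needs a careful look at how the finitely many ``small'' sets $\psi_j(a,K)$ (those with $\psi_j^*(a)$ true) interact with the cofinite ones: one must rule out or absorb the possibility that a $\psi_j(a,K)$ is cofinite for some $a$ and small for others within the same case, which is precisely why the case split along all of $(\psi_0^*(a),\dots,\psi_k^*(a))$, rather than along $\psi_0^*(a)$ alone, is forced. Once the case split is fixed, within each case every $\psi_j(a,K)$ is \emph{uniformly} cofinite or \emph{uniformly} small, the bound on $|K\setminus\psi_j(a,K)|\le\deg_Z Q_j(u,a,Z)$ from Lemma~\ref{dim9*} is uniform in $a$, and hence so is the bound on $|S_0(a)|$ on the locus where it is finite; everything else is an assembly of previously established facts.
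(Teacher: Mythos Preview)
Your approach is sound and would succeed, but it is more elaborate than the paper's. You go via the Lemma~\ref{dim8} characterization (uniform $\Phi_2$-covering), whereas the paper verifies the \emph{definition} of boundedness directly: it shows that $\{a\in K^n:\dim_2 S(a)\le 0\}$ is definable, without ever producing a uniform list of $\Phi_2$-indices. After the same reduction via Lemma~\ref{dim9+} to $S(a)=\psi_0(a,K)\cap\psi_1(a,K)^c\cap\cdots\cap\psi_r(a,K)^c$ with very special $\psi_i$, the paper observes in one line (using Lemma~\ref{dim9*} and the exclusive dichotomy ``cofinite or $\dim_2\le 0$'') that $\dim_2 S(a)\le 0$ holds if and only if $\psi_0^*(a)\vee\neg\psi_1^*(a)\vee\cdots\vee\neg\psi_r^*(a)$. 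No $2^{r+1}$-fold case split is needed, and no uniform covering has to be exhibited.

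Your middle paragraph, handling the case where $\psi_0^*(a)$ fails, is somewhat tangled: the phrase ``every $\psi_j(a,K)$ with $\psi_j^*(a)$ false is missing only finitely much'' is tautological (that is precisely what $\neg\psi_j^*(a)$ means), and the ``further $\ACF2$-formula'' you anticipate is unnecessary. Within each cell of your case split the answer is already constant: either some $\psi_j$ with $j\ge 1$ is cofinite throughout that cell (and then $S_0(a)$ is finite of size $\le\deg_Z Q_j$, uniformly), or none is (and then $S_0(a)^c$ has $\dim_2\le 0$, forcing $\dim_2 S_0(a)=1$ by nontriviality). Your own final paragraph essentially says this, so you would have untangled it when writing the details out. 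The payoff of your route is that it yields the uniform $\Phi_2$-covering of Lemma~\ref{dim8} explicitly; the paper's route is shorter but would have to invoke Lemma~\ref{dim8} separately if that covering were ever needed downstream.
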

\begin{proof} Let $(K,\k)\models \ACF2$, and let $S\subseteq K^{n+1}$ be $0$-definable in $(K,\k)$; we need to show that
$\{a\in K^n:\ \dim_2 S(a)\le 0\}$ is definable in $(K,\k)$. If $S=S_1\cup \dots \cup  S_d$
with $0$-definable $S_1,\dots, S_d$, $d\in \N$, then
$\dim_2 S(a)\le 0$ iff $\dim_2 S_i(a)\le 0$ for $i=1,\dots,d$. 
Thus by Lemma~\ref{dim9+} we can reduce to the case that
$S$ is defined in $(K,\k)$ by a conjunction 
$$\psi_0(y,z) \wedge \neg \psi_1(y,z) \wedge \cdots \wedge \neg \psi_r(y,z)$$ 
where $r\in \N^{\ge 1}$ and the $\psi_i(y,z)$ are very special $L_U$-formulas. For $a\in K^n$,
$$S(a)\ =\ \psi_0(a,K)\cap \psi_1(a,K)^c\cap \cdots \cap \psi_r(a,K)^c,$$
so by Lemma~\ref{dim9*} we have: $\dim_2 S(a)\le 0$ iff
$\dim_2 \psi_0(a,K)\le 0$ or $\psi_j(a,K)$ is cofinite for some 
$j\in \{1,\dots,r\}$. The desired result now follows from the last part of Lemma~\ref{dim9*}.
\end{proof}



\section{Definable sets of dimension zero}\label{Sec3}

In this section we analyse in more detail definable sets of dimension zero (with respect to the dimension function $\dim_2$) in models $(K,\k)$ of $\ACF2$.

\subsection*{Almost internality and Morley rank in $\ACF2$}

Let $S\subseteq K^n$ be nonempty and definable in a pair $(K,\k)$ of
algebraically closed fields. Since $\dim_2 \k =0$, we obtain 
as a special case of Corollary~\ref{corpro1} that if $S$ is 
co-analyzable relative to $\k$, then $\dim_2 S =0$.
Here is a strong converse:

\begin{prop}\label{ai} If $\dim_2 S=0$, then $S$ is almost internal to $\k$.
\end{prop}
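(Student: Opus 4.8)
The plan is to reduce, via the quantifier reduction already established, to a situation where $S$ is cut out by very special $L_U$-formulas, and then to exhibit an explicit finite-to-one definable relation onto $S$ with domain in a Cartesian power of $\k$. First I would note that it suffices to treat the case $n=1$: by Lemma~\ref{dim4} and its corollary, $\dim_2 S = 0$ for $S\subseteq K^n$ is equivalent to $\dim_2\pi_j(S)=0$ for each coordinate projection $\pi_j$, each $\pi_j(S)\subseteq K$ is then almost internal to $\k$, hence so is their product $\prod_j \pi_j(S)\supseteq S$, and a definable subset of an almost internal set is almost internal. So assume $S\subseteq K$ with $\dim_2 S=0$.

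By Lemma~\ref{dim3} (transported to $\dim_2$), $S$ is contained in a finite union of $\Phi_2$-sets: there are polynomials $P_1,\dots,P_r\in\Z[X,Y,Z]$ and a tuple $a\in K^n$ such that for every $b\in S$ there is $i\in\{1,\dots,r\}$ and $u\in\k^m$ with $P_i(u,a,b)=0$ and $P_i(u,a,Z)\neq 0$ in $K[Z]$. The idea is to build from this data a definable relation $R\subseteq E\times S$ with $E\subseteq \k^m$ (for a suitable $m$, uniform over the finitely many $i$) such that $R(E)=S$ and the fibers $R(u)$ are finite of uniformly bounded size. Concretely, set
$$R\ :=\ \big\{(i,u,b):\ 1\le i\le r,\ u\in\k^m,\ b\in S,\ P_i(u,a,b)=0,\ P_i(u,a,Z)\neq 0\big\},$$
absorbing the index $i$ into the parameter space (e.g.\ by taking $E\subseteq \k^{m+1}$ or just forming a disjoint union over $i$, using that a finite union of almost internal sets is almost internal). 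For each fixed $(i,u)$ in the domain, the section $R(i,u)\subseteq\{b: P_i(u,a,b)=0\}$ is contained in the zero set of the nonzero polynomial $P_i(u,a,Z)\in K[Z]$, hence has at most $\deg_Z P_i$ elements; so the fibers are bounded by $d:=\max_i\deg_Z P_i$, which does not depend on the point. And by the containment from Lemma~\ref{dim3}, projecting $R$ to the last coordinate gives all of $S$. Thus $R$ witnesses that $S$ is almost internal to $\k$ in the sense of the definition in the preliminaries (with $D=\k$, noting $|\k|\ge 2$ since $\k$ is an algebraically closed field).

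The main point to be careful about, and the step I expect to require the most attention, is the uniformity: the parameter tuple $a\in K^n$ is allowed to range over $K$, not just $\k$, so the domain $E$ of the witnessing relation lives in $\k^m\times\{1,\dots,r\}$ only after $a$ has been fixed, and one must check that ``almost internal to $\k$'' as defined permits such external parameters in the defining formula of $R$ --- which it does, since definability there is definability with parameters in the ambient structure $(K,\k)$. A secondary subtlety is that the finitely many polynomials $P_i$ and the bound $r$ and the tuple $a$ come from applying Lemma~\ref{dim3} inside a saturated elementary extension $(K^*,\k^*,S^*)$ and then pulling back by saturation; but this pullback is exactly what the proof of Lemma~\ref{dim3} already records, so no new work is needed there. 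Once the relation $R$ is in hand, the verification that it is definable, surjects onto $S$, and has uniformly finite fibers is routine polynomial algebra and requires no further ideas.
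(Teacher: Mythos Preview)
Your proof is correct and matches the paper's argument almost exactly: reduce to $n=1$ via coordinate projections and closure of almost-internality under products and subsets, then use Lemma~\ref{dim3} to cover $S$ by finitely many $\Phi_2$-sets and read off a definable finite-to-one relation from $\k^m$ onto $S$ using the bound $\deg_Z P_i$. The only cosmetic difference is that the paper reduces to a \emph{single} $\Phi_2$-set (invoking closure under finite unions and subsets), while you handle the finite collection at once by folding the index $i$ into the domain; these are equivalent packagings. One small remark: your opening line promises to use the quantifier reduction (Lemmas~\ref{dim9}--\ref{dim9+}), but in fact you never do, and neither does the paper---Lemma~\ref{dim3} alone suffices here.
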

\begin{proof} We first consider the case $n=1$. By Lemma~\ref{dim3}
we can reduce to the case that $S$ is a nonempty $\Phi_2$-set. Then
we have a nonzero polynomial $P\in \Z[X,Y,Z]$ as in the beginning
of Section~\ref{pairs} and an $a\in K^n$ such that
$S=\Phi_P(a,K)$. Set $d:= \deg_Z P$. Then the relation
$R\subseteq \k^m \times K$ given by 
$$R(u,b)\ :\Longleftrightarrow\ P(u,a,b)=0 \text{ and }P(u,a,Z)\ne 0$$
is definable in $(K,\k)$, $|R(u)|\le d$ for all $u\in \k^m$, and
$R(\k^m)=S$. Thus $S$ is almost internal to $\k$.  

Next let $n>1$, and assume that $\dim_2 S=0$.
Then $\dim_2 \pi_i(S)=0$ for $i=1,\dots,n$. Using the result for $n=1$, each $\pi_i(S)$ is almost internal to $\k$, hence
their cartesian product $\pi_1(S)\times \cdots \times \pi_n(S)$
is almost internal to $\k$, and thus the subset $S$ of this
product set is as well. 
\end{proof}

\medskip\noindent
The theory $\ACF2$ (or rather each completion of it) is $\omega$-stable, and moreover, $\MR(\k)=1$ and $\MR(K)=\omega$; see
\cite{Bue} or \cite{D2}. We use this to show:

\begin{prop} Assume $S\ne \emptyset$. Then
$\dim_2 S=0\Longleftrightarrow \MR(S)$ is finite. 
\end{prop}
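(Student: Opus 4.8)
The plan is to prove both implications separately, using the two preceding propositions together with the model-theoretic facts about $\ACF2$ recalled just above the statement.

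\textbf{The direction ``$\MR(S)$ finite $\Rightarrow \dim_2 S = 0$''.} Here I would argue contrapositively: suppose $\dim_2 S > 0$. By Lemma~\ref{dim2}(vi) there is a coordinate projection $\pi\colon K^n \to K$ (after a permutation of coordinates, which does not change $\dim_2$ or Morley rank) with $\dim_2 \pi(S) > 0$, hence $\dim_2 \pi(S) = 1$ since $\pi(S)\subseteq K$. By Proposition~\ref{p}, $\dim_2(K\setminus\pi(S))\le 0$, so $K\setminus\pi(S)$ is contained in a finite union of $\Phi_2$-sets; each $\Phi_2$-set is the algebraic closure of $\k(a)$ for a finite tuple $a$, and in a sufficiently saturated model such a set has Morley rank $1$ (it is an infinite set internal to $\k$, and $\MR(\k)=1$), so $K\setminus\pi(S)$ has finite Morley rank. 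Since $\MR(K)=\omega$ and Morley rank is subadditive on finite unions, $\pi(S)$ must have Morley rank $\omega$; as $\pi(S)$ is a definable image of $S$ under a coordinate projection, $\MR(S)\ge \MR(\pi(S))=\omega$, contradicting finiteness. (One should state this with $S$ replaced by its interpretation $S^*$ in a saturated elementary extension, where Morley rank is computed; since $\MR$ is invariant under elementary extension this is harmless.)

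\textbf{The direction ``$\dim_2 S = 0 \Rightarrow \MR(S)$ finite''.} By Proposition~\ref{ai}, if $\dim_2 S = 0$ then $S$ is almost internal to $\k$: there is a definable relation $R\subseteq E\times S$ with $E\subseteq \k^m$ definable, $R(E)=S$, and $|R(e)|\le d$ for all $e\in E$. Since $\MR(\k)=1$, we have $\MR(\k^m)=m$ and hence $\MR(E)\le m<\omega$. Now push forward: the projection $R\to S$ is finite-to-one with fibres of size $\le d$, and $R\subseteq E\times K^n$ projects onto $S$. Using additivity/monotonicity of Morley rank for definable maps in the $\omega$-stable theory $\ACF2$ — the standard fact that a definable surjection with finite fibres does not increase Morley rank, applied to $R\twoheadrightarrow S$, together with $\MR(R)\le \MR(E)\le m$ (the fibres of $R\to E$ are also finite) — we get $\MR(S)\le m<\omega$.

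\textbf{Main obstacle.} The delicate point is bookkeeping about where Morley rank is evaluated: $\dim_2$ is defined via a saturated elementary extension, $\MR$ is an invariant of the definable set, and almost-internality is preserved under elementary extension. I would handle this by fixing one sufficiently saturated $(K^*,\k^*,S^*)\succeq (K,\k,S)$ and doing everything there, invoking Lemma~\ref{dim5} (so $\dim_2$ is unchanged) and elementary equivalence (so $\MR$ is unchanged and Proposition~\ref{ai} still applies). The other mild subtlety is citing the right form of the rank inequalities for definable finite-to-one maps and for subsets — these are standard in $\omega$-stable theories, and for $\ACF2$ they are exactly what is recorded in \cite{Bue} and \cite{D2}; no genuinely new argument is needed beyond assembling Propositions~\ref{p} and \ref{ai} with these facts.
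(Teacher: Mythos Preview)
Your approach is essentially the same as the paper's: both directions rest on Propositions~\ref{p} and~\ref{ai} together with $\MR(\k)=1$ and $\MR(K)=\omega$, the only cosmetic difference being that the paper reduces to $n=1$ at the start of the $\Leftarrow$ direction rather than projecting at the end. One small slip to fix: a single $\Phi_2$-set $\phi_P(a,K)$ is not the full algebraic closure of $\k(a)$ and need not be internal to $\k$ or of Morley rank exactly~$1$; what you actually need (and what the $n=1$ case of Proposition~\ref{ai} gives) is that it is \emph{almost} internal to $\k$, hence of \emph{finite} Morley rank, and with that correction your argument goes through.
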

\begin{proof} From $\MR(\k)=1$ we obtain that if $S$ is almost internal to $\k$, then $\MR(S)$ is finite. In view of 
Proposition~\ref{ai} this gives the direction $\Rightarrow$.
For $\Leftarrow$ we can assume $n=1$ by using projections
as before. So $S\subseteq K$, and we assume that
$\MR(S)=0$. In view of $\MR(K)=\omega$, this excludes the possibility that $\dim_2(K\setminus S)\le 0$, so we must have $\dim_2 S=0$ by Proposition~\ref{p}.  
\end{proof}

\noindent
We now have established all equivalences of Theorem~\ref{mt}. 

\medskip\noindent
In the introduction we claimed that in this context, {\em internal to $\k$\/} is strictly stronger than {\em almost internal to $\k$}. We briefly sketch why. Take a pair $(K,\k)$ of algebraically closed fields of characteristic zero such that $K$ has infinite transcendence degree over $\k$,
let $t\in K\setminus \k$, and take $S:=\{b\in K: b^2\in t+ \k\}$.
Then $S$ is of course almost internal to $\k$, but $S$ is not
internal to $\k$. Suppose it were. Then
$S\subseteq \text{dcl}(\k(a))$ with $a\in K^n$, where 
$\text{dcl}(\k(a))$ is the definable closure of the field $\k(a)$ in $(K,\k)$. By considering automorphisms we see that $\text{dcl}(\k(a))=\k(a)$, so $S\subseteq \k(a)$, and so 
we have a strictly increasing sequence 
$$\k(\sqrt{t+1})\ \subset\ \k(\sqrt{t+1}, \sqrt{t+2})\ \subset\ \k(\sqrt{t+1}, \sqrt{t+2}, \sqrt{t+3})\ \subset\ \cdots$$
of subfields of $\k(a)$ containing $\k$, which is impossible.   

\section{Tame pairs of real closed fields}

\noindent
A {\bf tame pair of real closed fields\/} is a pair $(K, \k)$
of real closed fields with $\k$ a proper subfield of $K$ (and thus an ordered subfield
with respect to the unique field orderings of $K$ and $\k$) such that
$$\mathcal{O}\ =\ \k + \smallo \qquad (\text{tameness condition})$$
where $\mathcal{O}:=\{a\in K:\ |a|\le |b| \text{ for some }b\in \k \}$ is the 
convex hull of $\k$ in $K$ and $\smallo:=\{a\in K:\ |a| < b \text{ for every }b\in \k^{>}\}$ is the maximal ideal of the (convex) valuation ring 
$\mathcal{O}$ of $K$. Thus any real closed field $K$ containing $\R$
as a proper subfield yields a tame pair $(K,\R)$ of real closed fields, 
but $(\R, \k)$
with $\k$ the relative algebraic closure of $\Q$ in $\R$ is not a tame pair of
real closed fields, because the tameness condition is not satisfied.  

\medskip\noindent
Let $\RCF2$ be the $L_{U}$-theory of tame pairs $(K, \k)$ of real closed fields with $U$ interpreted as the underlying set of $\k$. We know from \cite{Macintyre68} that $\RCF2$ is complete. 
 
\medskip\noindent
Now $(\Phi_2,\RCF2)$ defines a pregeometry for the same reason as
$(\Phi_2, \ACF2)$ does. Moreover, $(\Phi_2, \RCF2)$ is nontrivial: take a real 
closed field extension $K$ of $\R$ such that $K$ has greater cardinality
than $\R$; every $\Phi_2$-set in $(K, \R)$ has clearly cardinality at most 
that of $\R$, so $K$ cannot be a finite union of $\Phi_2$-sets in $(K,\R)$;
it remains to appeal to the completeness of $\RCF2$. 

In the rest of this section we fix a tame pair $(K,\k)$ of real closed fields. The proof of Proposition~\ref{ai} goes through;
using  Lemma~\ref{dto} and the definable total ordering
of $K$ it gives the following for sets $S\subseteq K^n$ that are definable in 
 $(K,\k)$: 

\begin{lemma} If $\dim_2 S=0$, then $S$ is internal to $\k$.
\end{lemma}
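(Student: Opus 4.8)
The plan is to follow the blueprint already laid out for the algebraically closed case. The statement says that for a set $S\subseteq K^n$ definable in a tame pair $(K,\k)$ of real closed fields, if $\dim_2 S = 0$ then $S$ is internal to $\k$. First I would observe that the proof of Proposition~\ref{ai} is purely about the pregeometry $(\Phi_2,\cdot)$ and the structure of $\Phi_2$-sets: it uses only that $\dim_2 S = 0$ implies $S$ is contained in a finite union of $\Phi_2$-sets (Lemma~\ref{dim3}, which holds verbatim for $\dim_2$), and that each $\Phi_2$-set is the image under a finite-to-one definable relation of a definable subset of $\k^m$. Since $(\Phi_2,\RCF2)$ defines a pregeometry for the same reason $(\Phi_2,\ACF2)$ does, that argument carries over word for word and yields that $S$ is \emph{almost} internal to $\k$.

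Having reduced to almost internality, the second and final step is to upgrade ``almost internal'' to ``internal''. This is exactly Lemma~\ref{dto}: if $S$ is almost internal to a $0$-definable set $D$ with $|D|\geq 2$, and there is a definable total ordering on the ambient structure, then $S$ is internal to $D$. Here $D = \k$, which is $0$-definable (defined by $U$) and certainly infinite, and the ambient structure $(K,\k)$ carries the definable order $\leq$ of the ordered field $K$. So applying Lemma~\ref{dto} with $D=\k$ finishes the argument.

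I would write it essentially as: ``By the argument proving Proposition~\ref{ai}, which uses only Lemma~\ref{dim3} and the shape of $\Phi_2$-sets and hence applies unchanged here, $S$ is almost internal to $\k$. Since $\k$ is $0$-definable and infinite and $K$ carries a definable total ordering, Lemma~\ref{dto} gives that $S$ is internal to $\k$.'' There is essentially no obstacle: the only thing to check is that nothing in the proof of Proposition~\ref{ai} secretly used algebraic closedness of $K$ or $\k$ beyond what the pregeometry $(\Phi_2,\RCF2)$ already provides — and it does not, since $\Phi_2$-sets are still finite (zero sets of nonzero polynomials over a field) and the finite-to-one relation $R\subseteq \k^m\times K$ is defined by the same polynomial conditions. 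The genuinely new input, compared to the $\ACF2$ case, is just the availability of the definable order, which is exactly why the conclusion strengthens from ``almost internal'' to ``internal''.

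Indeed, the excerpt's own phrasing — ``The proof of Proposition~\ref{ai} goes through; using Lemma~\ref{dto} and the definable total ordering of $K$ it gives the following'' — already signals that this two-line combination is the intended proof, so I would keep the write-up correspondingly short and simply make the two citations explicit.

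\begin{proof}
The proof of Proposition~\ref{ai} uses only Lemma~\ref{dim3} (valid for $\dim_2$) together with the description of $\Phi_2$-sets as zero sets of nonzero polynomials, presented as images of definable subsets of $\k^m$ under finite-to-one definable relations. Nothing there requires $K$ or $\k$ to be algebraically closed, so that argument applies verbatim in the present setting and shows that $S$ is almost internal to $\k$. Since $\k$ is $0$-definable in $(K,\k)$ (it is defined by $U$) and infinite, and since $K$ carries the definable total ordering $\leq$, Lemma~\ref{dto} applies with $D=\k$ and yields that $S$ is internal to $\k$.
\end{proof}
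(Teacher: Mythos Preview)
Your proposal is correct and follows exactly the approach the paper itself indicates: reuse the proof of Proposition~\ref{ai} verbatim to obtain almost internality, then invoke Lemma~\ref{dto} via the definable total ordering on $K$ to upgrade to internality. There is nothing to add.
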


\medskip\noindent
We now equip $K$ with its order topology and each $K^n$ with
the corresponding product topology.

\begin{lemma} Suppose $S\subseteq K$ is definable in $(K,\k)$.  Then
$S$ is either discrete, or has nonempty interior in $K$. If $S$ is discrete, 
then $\dim_2 S \le 0$;
if $S$ has nonempty interior, then $\dim_2 S=1$.
\end{lemma}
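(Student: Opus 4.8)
The plan is to derive the dichotomy from the Dolich--Miller--Steinhorn result quoted as Proposition~\ref{pr}, and then match each alternative with the corresponding value of $\dim_2 S$. First I would invoke Proposition~\ref{pr}: since $S\subseteq K$ is definable in $(K,\k)$, it is either discrete or has nonempty interior in $K$. These two cases are not mutually exclusive a priori, but it will turn out that the first forces $\dim_2 S\le 0$ and the second forces $\dim_2 S=1$, so they are in fact exclusive once $S\neq\emptyset$.

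Next I would handle the ``nonempty interior'' case. If $S$ contains an open interval $(c,d)$ with $c<d$ in $K$, then $S$ contains an infinite set that is visibly not contained in any finite union of $\Phi_2$-sets: each $\Phi_2$-set in $(K,\k)$ is contained in $\acl$ of $\k(a)$ for a finite tuple $a$, hence has transcendence degree over $\k$ bounded by $|a|$, and a nonempty open interval always contains elements of arbitrarily large transcendence degree over any given subfield of bounded cardinality (here one uses the nontriviality of $(\Phi_2,\RCF2)$ already established, or more directly that an open interval in a real closed field is Zariski-dense). By Lemma~\ref{dim3} this means $\dim_2 S\neq 0$, and since $S\subseteq K$ gives $\dim_2 S\le\dim_2 K=1$, we get $\dim_2 S=1$.

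Then I would handle the ``discrete'' case, which is the heart of the matter and where I expect the main obstacle. The goal is to show a discrete definable $S\subseteq K$ has $\dim_2 S\le 0$, equivalently (by Lemma~\ref{dim3}) that $S$ is contained in a finite union of $\Phi_2$-sets. Passing to a $\kappa$-saturated elementary extension $(K^*,\k^*,S^*)$ as in the definition of $\dim_2$, it suffices to show $\rk_2(Ks\mid K)=0$ for each $s\in S^*$, i.e.\ that each $s\in S^*$ is algebraic over $\k^*(K)$ — equivalently, $s$ lies in a $\Phi_2$-set over $K$. Here is where I would lean on the quantifier reduction: using the $\RCF2$-analogue of Lemma~\ref{dim9+} (every $L_U$-formula is $\RCF2$-equivalent to a boolean combination of special formulas $\exists x(U(x)\wedge\phi(x,y))$ with $\phi$ an $L$-formula in the ordered-ring language), one reduces the description of $S$ to such special formulas and argues as in Lemma~\ref{dim9*} that if $\dim_2 S=1$ then $S$ contains a set with nonempty interior, contradicting discreteness. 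Concretely: if $\dim_2 S=1$ then by Proposition~\ref{p}'s analogue (which the paper is presumably about to prove, but which we may instead circumvent) $K\setminus S$ would have $\dim_2\le 0$, so $K\setminus S$ is contained in finitely many $\Phi_2$-sets and hence is ``small''; a set whose complement is contained in finitely many algebraic-closure-of-$\k(a)$ sets cannot be discrete in the order topology, since it must be somewhere dense.

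The cleanest route, which I would ultimately adopt, is to combine the above with the analogue of Proposition~\ref{p} for tame pairs — but since that may not yet be available, I would instead argue directly: if $S$ is discrete then $S$ has empty interior, so (by o-minimality of the reduct or the DMS dichotomy applied to $S$ and to $K\setminus S$) $K\setminus S$ has nonempty interior; if moreover $\dim_2 S=1$, then running the nonempty-interior argument of the previous paragraph on $K\setminus S$ as well would force both $S$ and $K\setminus S$ to have transcendence-degree-unbounded intersections with intervals, which is consistent — so this alone is not enough. The real input must therefore be the structural fact from \cite{Dol} that a discrete definable set in such a pair is contained in $\dcl(\k\cup F)$ for a finite $F\subseteq K$, or equivalently has $\dim_2\le 0$; the paper explicitly flags ``$\dim_2 S=0\Leftrightarrow S$ discrete'' for $n=1$ as coming from \cite[7.4]{Dol}. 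So the honest statement of the plan is: the nonempty-interior half is elementary (via Lemma~\ref{dim3} and nontriviality), and the discrete half is essentially a citation to \cite{Dol} repackaged through our $\dim_2$; the main obstacle is precisely verifying that the DMS ``discrete'' notion lines up with ``contained in finitely many $\Phi_2$-sets,'' which one does by the quantifier-reduction bookkeeping of Lemmas~\ref{dim9}--\ref{dim9*} transported to the ordered language.
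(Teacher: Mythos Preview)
Your overall shape is right---the dichotomy is \cite[7.4]{Dol} (local o-minimality of tame pairs), the nonempty-interior half is the routine check via Lemma~\ref{dim3} and nontriviality, and the discrete half rests on input from \cite{Dol}. But your treatment of the discrete case wanders and ends on the wrong method.

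You correctly set up the saturation framework and you correctly identify the key structural fact from \cite{Dol}: a discrete definable $S$ lies in $\operatorname{dcl}(\k\cup F)$ for a finite $F\subseteq K$. What you miss is that these two ingredients already finish the argument, without any quantifier reduction. The paper proceeds as follows: pass to an $\omega$-saturated $(K,\k)$ (Lemma~\ref{dim5} preserves $\dim_2$); the \emph{proof} of \cite[7.4]{Dol} gives $S\subseteq\operatorname{dcl}(\k(a))$ for some tuple $a\in K^n$, where $\operatorname{dcl}$ is taken in the real closed field $K$; since $\operatorname{dcl}$ in $\RCF$ is the relative field-theoretic algebraic closure, every $b\in S$ satisfies $P(u,a,b)=0$ with $P(u,a,Z)\neq 0$ for some $P\in\Z[X,Y,Z]$ and $u\in\k^m$, i.e.\ $b\in\phi_P(a,K)$; finally saturation (compactness) collapses this to finitely many $P$'s, so $S$ is covered by finitely many $\Phi_2$-sets and $\dim_2 S\le 0$ by Lemma~\ref{dim3}.

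Your proposed route---transporting the quantifier-reduction Lemmas~\ref{dim9}--\ref{dim9*} to the ordered language and mimicking the $\ACF2$ analysis---is neither carried out in the paper nor needed. Indeed the point of this section is precisely that the \cite{Dol} input \emph{replaces} the role those lemmas played for $\ACF2$: boundedness of $(\Phi_2,\RCF2)$ is deduced \emph{from} the present lemma, not the other way around, so you cannot lean on an analogue of Lemma~\ref{dim9*} here. Your intermediate attempts (via a Proposition~\ref{p}-style complement argument) are, as you yourself note, inconclusive; drop them and go straight from $S\subseteq\operatorname{dcl}(\k(a))$ to ``finite union of $\Phi_2$-sets'' by compactness.
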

\begin{proof} The first assertion is a consequence of \cite[7.4]{Dol},
which states the local o-minimality of
$(K,\k)$. Let $S$ be discrete. Passing to a suitable elementary extension of
$(K,\k)$ we can assume that $(K,\k)$ is $\omega$-saturated. Then the proof (or proof sketch) of \cite[7.4]{Dol} shows that  $S\subseteq \text{dcl}(\k(a))$ for some
$a\in K^n$, where
the definable closure is taken in the real closed field $K$. This definable 
closure is the relative algebraic closure of the field $\k(a)$ in $K$, so
for every $b\in S$ there exists a $P(X,Y,Z)\in \Z[X,Y,Z]$ as before
such that $P(u,a,b)=0$ and $P(u,a,Z)\ne 0$ for some $u\in \k^m$. By saturation only finitely many 
$P$'s are needed here, so $S$ is contained in a finite union of $\Phi_2$-sets,
and thus $\dim_2 S \le 0$. 

It is routine to check that if $S$ has nonempty interior in $K$, 
then $\dim_2 S \ne 0$, hence $\dim_2 S =1$.  
\end{proof} 

\noindent
Thus $\dim_2 S=0 \text{ if and only if } S \text{ is discrete}$,
where we assume that $S\subseteq K$ is definable in $(K,\k)$. Since 
{\it discrete\/} is a first-order condition, we conclude:

\begin{cor} $(\Phi_2, \RCF2)$ is bounded.
\end{cor}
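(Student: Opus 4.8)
The plan is to deduce boundedness of $(\Phi_2,\RCF2)$ from the boundedness criterion of Lemma~\ref{dim8} together with the equivalence ``$\dim_2 S=0 \Leftrightarrow S$ discrete'' that has just been established for definable subsets of $K$, and the fact that $\RCF2$ is complete. First I would recall that we already know $(\Phi_2,\RCF2)$ is nontrivial, so it suffices to verify the definability clause in the definition of \emph{bounded}: for every model $(K,\k)\models\RCF2$ and every definable $S\subseteq K^{n+1}$, the set $\{a\in K^n:\dim_2 S(a)=0\}$ is definable in $(K,\k)$.

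The key observation is that, by the displayed conclusion preceding the corollary, for a fixed tame pair $(K,\k)$ and a definable $S\subseteq K^{n+1}$ we have, for each $a\in K^n$, that $\dim_2 S(a)=0$ (equivalently $\dim_2 S(a)\le 0$, since $S(a)=\emptyset$ is not relevant here — $\dim_2\emptyset=-\infty\le 0$, and one checks the empty fiber poses no problem) precisely when $S(a)$ is discrete in $K$. Being discrete is expressible by a first-order $L_U$-formula in the parameter $a$: $S(a)$ is discrete iff for every $b\in S(a)$ there is an open interval around $b$ meeting $S(a)$ only in $b$, i.e.
$$\forall b\,\big(b\in S(a)\ \rightarrow\ \exists \epsilon>0\ \forall b'\,(b'\in S(a)\wedge |b'-b|<\epsilon\rightarrow b'=b)\big),$$
a formula in $y=(y_1,\dots,y_n)$ which I will call $\theta(y)$; it uses only the order and the relation symbol defining $S$, so it is an $L_U$-formula. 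Thus in \emph{each} model $(K,\k)$ of $\RCF2$ the set $\{a:\dim_2 S(a)=0\}$ is defined by $\theta(y)$, which gives the definability required. Since the statement of Lemma~\ref{dim8} (and the definition of bounded) only asks for definability model by model, and the defining formula $\theta$ is uniform, completeness of $\RCF2$ is not strictly needed for this step, but it is what guarantees that the dimension-theoretic facts proved for the fixed pair $(K,\k)$ transfer to every model.

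There is one subtlety to attend to, which is the mildest potential obstacle: the equivalence ``$\dim_2 S=0\Leftrightarrow S$ discrete'' was proved only for definable \emph{subsets of $K$}, i.e.\ for the case of a single free variable over which we take fibers, whereas $S(a)\subseteq K$ is exactly such a subset for each fixed $a$ — so in fact this matches perfectly, as the fiber variable $z$ is one-dimensional. One should also note that $S(a)$ being cofinite forces $\dim_2 S(a)=1$ (it has nonempty interior), so the dichotomy discrete/nonempty-interior is exhaustive and correctly tracked by $\theta$. With these points checked, the proof is a two-line appeal: the uniform formula $\theta(y)$ defines $\{a\in K^n:\dim_2 S(a)=0\}$ in every model of $\RCF2$, and since $(\Phi_2,\RCF2)$ is nontrivial, it is therefore bounded.

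\begin{proof}
We already observed that $(\Phi_2,\RCF2)$ is nontrivial. Let $(K,\k)\models\RCF2$ and let $S\subseteq K^{n+1}$ be definable in $(K,\k)$. By the equivalence established just above (``$\dim_2 T=0$ iff $T$ is discrete'', for $T\subseteq K$ definable in $(K,\k)$), applied to the fibers $T=S(a)\subseteq K$, we have for each $a\in K^n$ that $\dim_2 S(a)=0$ iff $S(a)$ is discrete in the order topology of $K$, while $\dim_2 S(a)=1$ iff $S(a)$ has nonempty interior, and $\dim_2 S(a)=-\infty$ iff $S(a)=\emptyset$. Discreteness of $S(a)$ is expressed by the $L_U$-formula
$$\theta(y):\quad \forall z\,\Big(S(y,z)\ \rightarrow\ \exists\epsilon\,\big(\epsilon>0\ \wedge\ \forall z'\,\big(S(y,z')\wedge |z'-z|<\epsilon\ \rightarrow\ z'=z\big)\big)\Big),$$
where $S(y,z)$ abbreviates the defining formula for $S$. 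Hence $\{a\in K^n:\dim_2 S(a)=0\}=\theta(K^n)$ is definable in $(K,\k)$. This holds for every model of $\RCF2$, so $(\Phi_2,\RCF2)$ is bounded.
\end{proof}
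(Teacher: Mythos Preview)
Your proof is correct and follows exactly the paper's approach: the paper simply notes that since ``$\dim_2 S=0\Leftrightarrow S$ is discrete'' for definable $S\subseteq K$ and discreteness is a first-order condition, the set $\{a\in K^n:\dim_2 S(a)=0\}$ is definable, giving boundedness. The only trivial imprecision is that $\theta(K^n)$ actually equals $\{a:\dim_2 S(a)\le 0\}$ (it includes the empty fibers), but since $\{a:S(a)\ne\emptyset\}$ is definable this does not affect the conclusion.
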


\begin{prop} Let $S\subseteq K^n$ be nonempty and definable. Then
$$\dim_2 S=0 \text{ if and only if } S \text{ is discrete}.$$
\end{prop}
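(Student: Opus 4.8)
The plan is to deduce the $n$-variable statement from the one-variable case already proved above (for definable $S\subseteq K$: $\dim_2 S=0$ iff $S$ is discrete), together with the boundedness of $(\Phi_2,\RCF2)$ and the behaviour of $\dim_2$ under coordinate projections. Throughout, write $\pi_j\colon K^n\to K$ for the $j$-th coordinate projection.

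First I would dispose of the easy implication $\dim_2 S=0\Rightarrow S$ discrete. Since $S$ is nonempty with $\dim_2 S=0$, the observation following Lemma~\ref{dim4} (which, like that lemma, holds for $\dim_2$) gives $\dim_2\pi_j(S)=0$ for $j=1,\dots,n$. Each $\pi_j(S)$ is a nonempty definable subset of $K$, so by the one-variable case it is discrete; a finite product of discrete subspaces is discrete in the product topology, so $\pi_1(S)\times\cdots\times\pi_n(S)$ is discrete, and therefore so is its subspace $S$.

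For the substantive implication $S$ discrete $\Rightarrow\dim_2 S=0$ I would follow the pattern of the one-variable proof. By Lemma~\ref{dim5}, and because discreteness of a definable set is a first-order property, we may assume $(K,\k)$ is $\omega$-saturated. The structural argument behind \cite[7.4]{Dol} should then produce a tuple $a\in K^N$ such that every coordinate of every point of $S$ lies in $\operatorname{dcl}_{(K,\k)}(\k(a))$, which — as in the one-variable proof — equals the relative algebraic closure of the field $\k(a)$ in $K$. Thus for each $j$ and each $b\in\pi_j(S)$ there are $P\in\Z[X,Y,Z]$ and $u\in\k^m$ with $P(u,a,b)=0$ and $P(u,a,Z)\ne 0$; by $\omega$-saturation finitely many polynomials $P$ suffice uniformly over all such $b$ and all $j$, so each $\pi_j(S)$ lies in a finite union of $\Phi_2$-sets and hence $\dim_2\pi_j(S)=0$ by Lemma~\ref{dim3} (which holds for $\dim_2$). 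The observation following Lemma~\ref{dim4} then gives $\dim_2 S=0$.

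An alternative, internal to this paper, is induction on $n$: projecting onto the last coordinate $\pi_n\colon K^n\to K$, each fibre $\pi_n^{-1}(c)\cap S$ is a definable subset of $K^{n-1}$ that is discrete (being a subspace of the discrete space $S$), hence of $\dim_2$ zero by the inductive hypothesis, so Proposition~\ref{pro1}(2), applicable by boundedness, yields $\dim_2 S=\dim_2\pi_n(S)$; one then has to know that $\pi_n(S)\subseteq K$ is discrete and invoke the one-variable case. Either way, the crux — and the only place where \cite{Dol} is genuinely needed in arity $>1$ — is the multivariable input: that a definable discrete subset of $K^n$ is contained in $\operatorname{dcl}\big(\k\cup\{\text{a finite tuple}\}\big)$, equivalently that coordinate projections of definable discrete sets are again discrete, equivalently that no definable discrete set in $K^m$ has a coordinate projection with nonempty interior. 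Granting this (a consequence of the local o-minimality / d-minimality theory of tame pairs), everything else is routine bookkeeping with the dimension calculus of Sections~\ref{pregdim} and~\ref{pairs}.
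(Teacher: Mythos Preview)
Your forward direction ($\dim_2 S=0\Rightarrow S$ discrete) is exactly the paper's argument.

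For the converse, however, you have correctly located the crux but not dealt with it. Both of your proposed arguments---the direct one via $\operatorname{dcl}$ and the inductive one---reduce to knowing that coordinate projections of a definable discrete $S\subseteq K^n$ are discrete (equivalently, have empty interior). You then write ``Granting this (a consequence of the local o-minimality / d-minimality theory of tame pairs)\dots''. But local o-minimality as stated in \cite[7.4]{Dol} is a one-variable statement; it does not by itself say anything about projections of higher-dimensional discrete sets, and you give no reference or argument for why it should. Since that implication is precisely the nontrivial content of the direction ``discrete $\Rightarrow\dim_2 S=0$'', your proof as written is circular: it assumes what has to be proved.

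The paper closes this gap by a quite different device, borrowed from \cite[Proposition~4.1]{D3}. One first descends to a countable elementary substructure (over which $S$ is defined), and then passes to the completion $K^{\operatorname{c}}$ of that countable $K$ as in \cite[Section~4.4]{D4}, obtaining an elementary extension $(K^{\operatorname{c}},\k)$ in which $K^{\operatorname{c}}$ is \emph{uncountable} but has a \emph{countable base} for its order topology. In such a model any discrete definable set is countable, hence each $\pi_i(S)$ is a countable subset of the uncountable line $K^{\operatorname{c}}$ and so has empty interior; the one-variable dichotomy then forces $\dim_2\pi_i(S)=0$, and the remark after Lemma~\ref{dim4} finishes. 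No multivariable input from \cite{Dol} is needed.
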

\begin{proof} Let $\pi_i:K^n\rightarrow K$ be given by $\pi_i(x_1,\dots,x_n)=x_i$, for $i\in \{1,\dots,n\}$. Suppose $\dim S=0$. Then $\dim_2 \pi_i(S)=0$ by lemma \ref{dim3}, so $\pi_i(S)$ is discrete for each $i$, by the above. Thus the product $\pi_1(S)\times \dots \times\pi_n(S)$ is discrete, and so is its subset $S$.

\medskip\noindent
For the converse we use the same argument as in the proof of \cite[Proposition 4.1]{D3}. Assume $S\subseteq K^n$ is discrete. We first replace 
$(K,\k)$ by a suitable countable elementary substructure over which $S$ is defined and $S$ by its corresponding trace.
Now that $K$ is countable we next pass to its completion $K^{\operatorname{c}}$ as defined in~\cite[Section~4.4]{D4}, which gives 
an elementary extension $(K^{\operatorname{c}},\k)$ of $(K,\k)$. Replacing $K$ by $K^{\operatorname{c}}$
and $S$ by the corresponding
extension, the overall effect is that we have arranged
$K$ to be {\em uncountable,}\/ but with a {\em countable}\/ base for its topology. 
Then the discrete set $S$ is countable, so $\pi_i(S)\subseteq K$
is countable for each $i$, hence with empty interior, so
$\dim \pi_i(S)=0$ for all $i$, and thus
$\dim S =0$. 
\end{proof}

\noindent
These results and earlier generalities yield Theorem~\ref{mtr}


\textsc{Departamento de Matem\'aticas, Universidad de los Andes, Cra. 1. No. 18A-10, Bogotá, Colombia.}

\textit{E-mail address:} jl.angel76@uniandes.edu.co

\medskip\noindent
\textsc{Department of Mathematics, University of Illinois at Urbana-Champaign, 1409 W. Green Street, Urbana, IL 61801.}

\textit{E-mail address:} vddries@math.uiuc.edu

\begin{thebibliography}{99}

\bibitem[1]{D4} M. Aschenbrenner, L. van den Dries, J. van der Hoeven,\emph{Asymptotic Differential Algebra and Model Theory of Transseries}, Ann. of Math. Stud. 195, Princeton U. Press, arXiv:1509.02588. 

\bibitem[2]{D3} M. Aschenbrenner, L. van den Dries, J. van der Hoeven, \emph{Dimension in the realm of transseries}, to appear in Communications in Mathematics, (2016).

\bibitem[3]{Bue} S. Buechler, \emph{Pseudoprojective strongly minimal sets are local projective}, J. Symbolic Logic 56 (1991), no. 4, 1184--1194. 

\bibitem[4]{Dol} A. Dolich, C. Miller, C. Steinhorn, \emph{Structures having o-minimal open core}, Trans. Am. Math. Soc. 362 (2010), 1371--1411.

\bibitem[5]{D1} L. van den Dries, \emph{Dimension of definable sets, algebraic boundedness and Henselian fields}, Ann. Pure Appl. Logic 45 (1989), 189--209.

\bibitem[6]{D2} L. van den Dries, A. Gunaydin, \emph{The fields of real and complex numbers with a small multiplicative group}, Proc. London Math. Soc. 93 (2006), 43--81.

\bibitem[7]{F} A. Fornasiero, \emph{Dimensions, matroids, and dense pairs of first-order structures}, Ann. Pure Appl. Logic 162 (2011),  514--543.

\bibitem[8]{HHM} B. Herwig, E. Hrushovski, D. Macpherson, \textit{Interpretable groups, stably embedded sets, and 
Vaughtian pairs,} J. London Math. Soc. 68 (2003),  1--11.

\bibitem[9]{Macintyre68}
A. Macintyre, \textit{Classifying Pairs of Real Closed Fields,}
Ph.~D.~Thesis, Stanford University, 1968.










 




\end{thebibliography}
\end{document}